\documentclass[12pt,reqno]{amsart}

\usepackage[T1]{fontenc}
\usepackage[utf8]{inputenc}

\usepackage{graphicx}
\usepackage[dvipsnames]{xcolor}

\usepackage{amsmath, amscd, amsthm,mathtools}

\usepackage{libertine}          
\usepackage[libertine]{newtxmath} 
\usepackage[scr=rsfs, cal=euler, bb=ams]{mathalfa} 

\usepackage{bm} 

\usepackage[margin=2.5cm]{geometry} 

\usepackage{verbatim, comment}
\usepackage[numbers]{natbib}

\everymath{\displaystyle}

\usepackage[
    colorlinks=true,
    allcolors=NavyBlue,      
    bookmarksnumbered=true,
    psdextra,             
    pdfencoding=auto
]{hyperref}

\usepackage[nameinlink, capitalize]{cleveref}

\usepackage{aliascnt}

\theoremstyle{plain}

\newtheorem{thm}{Theorem}[section]
\crefname{thm}{Theorem}{Theorems}

\newaliascnt{lem}{thm}
\newtheorem{lem}[lem]{Lemma}
\aliascntresetthe{lem}
\crefname{lem}{Lemma}{Lemmas}

\newaliascnt{prop}{thm}
\newtheorem{prop}[prop]{Proposition}
\aliascntresetthe{prop}
\crefname{prop}{Proposition}{Propositions}

\newaliascnt{cor}{thm}

\aliascntresetthe{cor}
\crefname{cor}{Corollary}{Corollaries}

\newaliascnt{fact}{thm}

\aliascntresetthe{fact}
\crefname{fact}{Fact}{Facts}

\newaliascnt{conj}{thm}

\aliascntresetthe{conj}
\crefname{conj}{Conjecture}{Conjectures}

\newtheorem{mainthm}{Main Theorem}

\newaliascnt{mainconj}{mainthm}

\aliascntresetthe{mainconj}
\crefname{mainconj}{Main Conjecture}{Main Conjectures}

\theoremstyle{definition}

\newaliascnt{dfn}{thm}

\aliascntresetthe{dfn}
\crefname{dfn}{Definition}{Definitions}

\newaliascnt{rem}{thm}
\newtheorem{rem}[rem]{Remark}
\aliascntresetthe{rem}
\crefname{rem}{Remark}{Remarks}

\newaliascnt{ex}{thm}

\aliascntresetthe{ex}
\crefname{ex}{Example}{Examples}

\newcommand{\NN}{\mathbb{N}}
\newcommand{\ZZ}{\mathbb{Z}}
\newcommand{\QQ}{\mathbb{Q}}
\newcommand{\QQbar}{\overline{\mathbb{Q}}}

\newcommand{\CC}{\mathbb{C}}
\newcommand{\A}{\mathcal{A}}
\newcommand{\C}{\mathcal{C}}
\newcommand{\PP}{\mathcal{P}}
\newcommand{\OO}{\mathcal{O}}
\newcommand{\F}{\mathbb{F}}
\newcommand{\Gal}{\mathrm{Gal}}
\newcommand{\Frob}{\mathrm{Frob}}
\newcommand{\Spec}{\mathrm{Spec}}
\newcommand{\GL}{\mathrm{GL}}
\newcommand{\Aut}{\mathrm{Aut} }
\newcommand{\tr}{\mathrm{tr}}
\newcommand{\fp}{\mathfrak{p}}
\newcommand{\Kbar}{\overline{K}}
\newcommand{\GSp}{\mathrm{GSp}}
\newcommand{\End}{\mathrm{End}}
\newcommand{\Ker}{\mathrm{Ker}}

\title[$\PP_{\A}^{0}$-transcendence of the Frobenius traces]{On the finite transcendence of Frobenius traces for abelian varieties over $\QQ$}

\author{Yuto Tsuruta}
\address{Mathematical Institute, Tohoku University, Sendai 980-8578, Japan}
\email{tsuruta.yuuto.q7@dc.tohoku.ac.jp}

\date{}

\begin{document}

\begin{abstract}
    The first purpose of this paper is to give the finite transcendence of Frobenius traces for elliptic curves over $\QQ$ without the assumption of complex multiplication (CM). This result generalizes the previous work by Luca and Zudilin, who obtained similar transcendence results specifically for the CM case. The second purpose is to give the finite transcendence of Frobenius traces for a certain class of abelian varieties over $\QQ$, by using Galois representations and Luca--Zudilin's method.
\end{abstract}

\maketitle

\section{Introduction and Main Theorem}
The \textit{finite algebraic number} which lives in the ring
\begin{align}
\nonumber
    \A=\left. \prod_{p:\text{prime}}\F_{p}\middle/\bigoplus_{p:\text{prime}}\F_{p} \right.
\end{align}
was first introduced by Rosen (\cite{Rosen}) in 2020. Rosen constructed analogues of $\QQbar$ and of (complex) periods in $\A$, and investigated their properties including their algebraic structures. The set of finite algebraic numbers defined in \cite{Rosen} is written as $\PP_{\A}^{0}$. It can be characterized as follows:
\begin{thm}[{\cite[Theorem 1.1]{Rosen}}]\label{Thm: Rosen}
  Let $\bm{t}=(t_p\bmod p)_p\in\A$. Then, the following are equivalent:
  \begin{enumerate}
    \item \label{item:algebraic1} $\bm{t} \in \PP_{\A}^{0}$.
    \item \label{item:algebraic2} There exists a linear recurrent sequence $(a_{n})_{n}\in\QQ^{\NN}$ such that $\bm{t}=(a_{p}\bmod p)_{p}$.
    \item \label{item:algebraic3} There exists a finite Galois extension $L/\QQ$ and a map $\phi \colon \Gal(L/\mathbb{Q}) \to L$ satisfying
    \begin{align}
    \nonumber
        \phi(\sigma\tau\sigma^{-1}) = \sigma \phi(\tau) \quad \text{for all } \sigma, \tau \in \Gal(L/\QQ),
    \end{align}
    such that
    \begin{align}
    \nonumber
        \bm{t} = \left( \phi(\Frob_{\fp}) \bmod \fp \right)_{p}.
    \end{align}
    Here, $\fp$ is a prime ideal of $\OO_L$ lying over $p$ and $\Frob_{\fp}$ is a Frobenius element in the decomposition group $D_{\fp} \subset \Gal(L/\QQ)$.
\end{enumerate}
\end{thm}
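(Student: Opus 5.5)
The plan is to prove \eqref{item:algebraic2}$\Leftrightarrow$\eqref{item:algebraic3} by a direct computation with Frobenius, and to get \eqref{item:algebraic1}$\Leftrightarrow$\eqref{item:algebraic3} by unwinding Rosen's definition of $\PP_{\A}^{0}$. Throughout, two elements of $\A$ are equal as soon as their components agree for all but finitely many $p$. So we may always discard the finitely many primes that ramify in $L$ or that divide a denominator or a leading coefficient appearing in the construction.

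\emph{\eqref{item:algebraic2}$\Rightarrow$\eqref{item:algebraic3}.} Write $a_n=\sum_i P_i(n)\alpha_i^n$, where the $\alpha_i$ are the distinct roots of the characteristic polynomial and $P_i\in L[x]$, with $L$ the splitting field. Since $a_n\in\QQ$ for all $n$, uniqueness of this expansion shows that each $\sigma\in\Gal(L/\QQ)$ permutes the pairs $(P_i,\alpha_i)$. For $p$ unramified and prime to all denominators we have $P_i(p)\equiv P_i(0)\bmod \fp$ and $\alpha_i^p\equiv \Frob_{\fp}(\alpha_i)\bmod\fp$. Hence $a_p\equiv \phi(\Frob_\fp)\bmod \fp$, where
\begin{align}
\nonumber
\phi(\tau)=\sum_i c_i\,\tau(\alpha_i), \qquad c_i=P_i(0).
\end{align}
Conjugation equivariance follows from the permutation property:
\begin{align}
\nonumber
\phi(\sigma\tau\sigma^{-1})=\sigma\Bigl(\sum_i \sigma^{-1}(c_i)\,\tau(\sigma^{-1}\alpha_i)\Bigr)=\sigma\phi(\tau).
\end{align}

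\emph{\eqref{item:algebraic3}$\Rightarrow$\eqref{item:algebraic2}.} Let $G=\Gal(L/\QQ)$. By Dedekind's independence of characters, the maps $\tau\mapsto\tau(\gamma)$, for $\gamma$ running over a $\QQ$-basis $\gamma_1,\dots,\gamma_d$ of $L$, span $\mathrm{Map}(G,L)$ over $L$. So we can write $\phi(\tau)=\sum_j\beta_j\tau(\gamma_j)$ with $\beta_j\in L$, and we may take the $\gamma_j\in\OO_L$. Define
\begin{align}
\nonumber
a_n=\frac{1}{|G|}\sum_j \tr_{L/\QQ}(\beta_j\gamma_j^n).
\end{align}
This is a rational linear recurrent sequence, being a $\QQ$-combination of power sums of the conjugates of the $\gamma_j$. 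For good $p$ we have $\sigma(\gamma_j)^p\equiv\Frob_\fp\sigma(\gamma_j)\bmod\fp$, and therefore
\begin{align}
\nonumber
|G|\,a_p\equiv\sum_{\sigma}\sum_j\sigma(\beta_j)\,\sigma\bigl(\sigma^{-1}\Frob_\fp\sigma\bigr)(\gamma_j)=\sum_\sigma\sigma\,\phi(\sigma^{-1}\Frob_\fp\sigma)=|G|\,\phi(\Frob_\fp)\pmod{\fp},
\end{align}
where the last equality uses the equivariance of $\phi$. Since $p\nmid|G|$ for unramified $p$, this gives $a_p\equiv\phi(\Frob_\fp)\bmod\fp$. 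A byproduct of this step is that $\phi(\Frob_\fp)\bmod\fp$ lands in $\F_p$ and does not depend on the choice of $\fp$. Both facts also follow directly from equivariance: $\Frob_\fp$ fixes $\phi(\Frob_\fp)$, and changing $\fp$ to $\sigma\fp$ conjugates $\Frob_\fp$ by $\sigma$.

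\emph{\eqref{item:algebraic1}$\Leftrightarrow$\eqref{item:algebraic3}.} Rosen defines $\PP_{\A}^{0}$ through periods of Artin (weight zero) motives, that is, through $H^0$ of zero-dimensional schemes $X=\Spec K$ together with a de Rham class and a crystalline/étale class. Here I would choose $L$ to be the Galois closure of $K$ and identify $X(\QQbar)$ with $G$-sets. A pairing of a de Rham class $f\in K$ against a Frobenius-fixed étale class then becomes exactly a sum $\sum_j\beta_j\,\Frob_\fp(\gamma_j)$ reduced mod $\fp$, i.e.\ an equivariant $\phi$ evaluated at Frobenius. Conversely, every equivariant $\phi$ arises in this way from $X=\Spec L$. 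I expect this bookkeeping step to be the main obstacle: one must match Rosen's precise normalisation of the comparison isomorphism with the formula above. The two computational directions are routine once the good primes are isolated.
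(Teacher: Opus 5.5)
The paper contains no proof of this statement. It is quoted from \cite[Theorem 1.1]{Rosen}, and the paper does not reproduce Rosen's definition of $\PP_{\A}^{0}$. The only information it gives about item (1) is the remark in the introduction, citing Rosen's Theorem 4.2, that elements of $\PP_{\A}^{0}$ are obtained from the $\A$-valued Frobenius $F_{\A}$ on $H^{0}_{dR}(\Spec(L))\otimes_{\QQ}\A$. The paper itself only uses the implication (1)$\Rightarrow$(3), at the start of the proofs of both main theorems. So your proposal has to stand on its own.

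Your treatment of (2)$\Leftrightarrow$(3) is correct. The following all check out:
\begin{itemize}
\item the Galois permutation of the pairs $(P_i,\alpha_i)$;
\item the congruences $P_i(p)\equiv P_i(0)$ and $\alpha_i^{p}\equiv\Frob_{\fp}(\alpha_i)\bmod\fp$;
\item the fact that the maps $\tau\mapsto\tau(\gamma_j)$ form an $L$-basis of $\mathrm{Map}(G,L)$;
\item the averaging computation using equivariance.
\end{itemize}
One justification is false but harmless: being unramified does not give $p\nmid|G|$. For example, $2$ is inert in $\QQ(\sqrt{5})$ while $|G|=2$. You simply discard the finitely many $p$ dividing $|G|$, which your convention already allows.

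The genuine gap is (1)$\Leftrightarrow$(3). It is left as an unexecuted plan, and it is exactly the direction the paper relies on. The plan also does not match the description the paper points to. There is no \'etale class there: an $\A$-valued period of $\Spec(L)$ is a matrix coefficient $\lambda(F_{\A}\omega)$, with $\omega\in H^{0}_{dR}(\Spec(L))=L$ and $\lambda$ a $\QQ$-linear functional on $L$.

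Once written this way, the missing step is short and is best routed through (2).
\begin{itemize}
\item For (1)$\Rightarrow$(2): we have $L\otimes_{\QQ}\A=\prod_p(\OO_L/p\OO_L)/\bigoplus_p(\OO_L/p\OO_L)$, and at unramified $p$ the Frobenius acts by $x\mapsto x^{p}$. Every functional has the form $x\mapsto\tr_{L/\QQ}(\alpha x)$. Hence the matrix coefficients are $(\tr_{L/\QQ}(\alpha\beta^{p})\bmod p)_p$ with $\alpha,\beta\in L$, i.e.\ reductions of the rational recurrences $\tr_{L/\QQ}(\alpha\beta^{n})$. Your (2)$\Rightarrow$(3) argument then gives (3).
\item For (2)$\Rightarrow$(1): your (2)$\Rightarrow$(3) step shows $a_p\equiv\sum_i c_i\alpha_i^{p}$. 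Group the Galois-stable set of pairs $(c_i,\alpha_i)$ into orbits. Each orbit sum is $\tr_{K/\QQ}(c_i\alpha_i^{p})$ with $K=\QQ(\alpha_i)$; this field contains $c_i$ because the stabiliser of $\alpha_i$ fixes $P_i$. This gives (2)$\Rightarrow$(1), provided $\PP_{\A}^{0}$ is closed under addition.
\end{itemize}

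Both directions must still be checked against Rosen's exact definition, which the paper does not supply. For instance, it matters whether he takes a $\QQ$-span, and whether finite \'etale algebras are allowed.
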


\begin{rem}[cf. {\cite[Section 2]{Rosen}}]
    To avoid possible misunderstanding concerning the notation used in \Cref{Thm: Rosen}-(\ref{item:algebraic3}), we clarify the following. Let $p$ be a rational prime and $\fp$ be a prime ideal of $\OO_L$ lying over $p$ such that the extension $L/\QQ$ is unramified at $\fp$. Assume that $\fp$ is coprime to the denominators of all values of $\phi$. By definition of $\phi\colon\Gal(L/\QQ)\to L$, it follows that the residue class $\phi(\Frob_{\fp})\bmod\fp$ is fixed by $\Frob_{\fp}$. Therefore, we see that
    \begin{align}
    \nonumber
    \phi(\Frob_{\fp})\bmod\fp\in\F_{p}\subset\OO_{L}/\fp.
    \end{align}
    In addition, we note that the residue class $\phi(\Frob_{\fp})\bmod\fp$ is independent of the choice of $\fp\mid p$ (see {\cite[Section 4]{Rosen2}}). Thus, $(\phi(\Frob_\fp)\bmod\fp)_p$ is well-defined as an element of $\A$. 
\end{rem}
The above theorem seems to differ from the classical definition of algebraic numbers. However, Rosen pointed out that every element in $\PP_{\A}^{0}$ is a root of a non-zero polynomial:
\begin{thm}[{\cite[Theorem 1.4]{Rosen}}]\label{Thm: Rosen 1.4}
    If the element $\bm{t}=(t_{p}\bmod p)_{p}$ is in $\PP_{\A}^{0}$, then, there exists a non-zero polynomial $f(X)\in\QQ[X]$ such that $f(\bm{t})=0$ in $\A$, and every such $f(X)$ has a rational root.
\end{thm}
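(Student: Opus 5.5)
Using the characterization in \Cref{Thm: Rosen}-(\ref{item:algebraic3}), I will write $\bm{t}=(\phi(\Frob_{\fp})\bmod\fp)_p$ for a finite Galois extension $L/\QQ$ with group $G=\Gal(L/\QQ)$ and a map $\phi\colon G\to L$ satisfying $\phi(\sigma\tau\sigma^{-1})=\sigma\phi(\tau)$. The candidate polynomial is
\begin{align}
\nonumber
    f(X)=\prod_{\tau\in G}\prod_{\sigma\in G}\bigl(X-\sigma\phi(\tau)\bigr),
\end{align}
or, more economically, the product of the distinct $X-\alpha$ with $\alpha$ ranging over the $G$-orbit of $\{\phi(\tau):\tau\in G\}$. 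This set is $G$-stable, so $f\in\QQ[X]$ and $f\neq 0$. For all but finitely many $p$ (those unramified in $L$ and prime to the denominators of the values of $\phi$, and to the denominators of the coefficients of $f$), reducing modulo $\fp$ gives $f(X)\equiv\prod_\alpha(X-\alpha)\pmod{\fp}$. One of the factors is $X-\phi(\Frob_{\fp})$, so $f(t_p)\equiv 0\pmod{\fp}$. Since $f(t_p)\in\ZZ_{(p)}$ and $\fp\cap\ZZ=p\ZZ$, it follows that $f(t_p)\equiv 0\pmod p$. Hence $f(\bm{t})=0$ in $\A$.

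For the second assertion, let $f\in\QQ[X]$ be any nonzero polynomial with $f(\bm{t})=0$. The idea is to exploit Chebotarev density: every conjugacy class of $G$ occurs as $\Frob_{\fp}$ for infinitely many $p$. Choose the identity class $\tau=1$. The relation $\phi(\sigma\sigma^{-1})=\sigma\phi(1)$ shows that $a:=\phi(1)$ is fixed by all of $G$, so $a\in\QQ$. For the infinitely many primes $p$ that split completely in $L$ (and avoid the finite bad set), we have $\Frob_{\fp}=1$, so $t_p\equiv a\pmod p$. Then $f(\bm{t})=0$ gives $f(a)\equiv f(t_p)\equiv 0\pmod p$ for infinitely many $p$, after clearing denominators so that both values are $p$-integral. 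The rational number $f(a)$ is therefore divisible by infinitely many primes, which forces $f(a)=0$. Thus $a\in\QQ$ is a rational root of $f$.

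I do not expect serious obstacles. The main points to handle carefully are the bookkeeping of the finite exceptional set of primes, namely the denominators of $\phi$, the ramified primes, and the denominators of the coefficients of $f$, together with the passage from congruence modulo $\fp$ to congruence modulo $p$ for elements of $\ZZ_{(p)}$. The only nontrivial input is the Chebotarev density theorem; in fact only the weaker statement that infinitely many primes split completely in $L$ is needed. Well-definedness of $\bm{t}$ independent of the choice of $\fp\mid p$ is supplied by the remark.
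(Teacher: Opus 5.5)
Your proof is correct. The $G$-stability of $\mathrm{Im}(\phi)$ gives a nonzero $f\in\QQ[X]$ with $f(\bm{t})=0$. The primes splitting completely in $L$ force $t_p\equiv\phi(\mathrm{id})\in\QQ$, so $f(\phi(\mathrm{id}))$ is divisible by infinitely many primes and must vanish.

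The paper does not prove this statement itself; it quotes it from Rosen. Your key step is nonetheless exactly the mechanism the paper uses in the proof of \Cref{Main 1}: primes split completely in $L$ give $\Frob_{\fp}=\mathrm{id}$, and the equivariance of $\phi$ gives $\phi(\mathrm{id})\in\QQ$.
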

The key point of \Cref{Thm: Rosen 1.4} is the assertion that every such polynomial has a rational root. Thus, one might instead regard the set 
\begin{align}
\nonumber
    \C_{\A}=\{\bm{t}\in\A\ |\ {}^{\exists}f(X)\in\QQ[X]\setminus\{0\}\ \text{s.t.}\ f(\bm{t})=0\ \text{in}\ \A\}
\end{align}
as a more natural candidate for the set of finite algebraic numbers. However, it is known that $\C_{\A}$ fails to have the fundamental properties of $\QQbar$ as the following proposition shows.
\begin{prop}[{\cite[Section 4.1]{Rosen}}, {\cite[Proposition 3.8]{AF}}]
The following claims hold.
    \begin{enumerate}
        \item $\C_{\A}$ is an uncountable set.
        \item There are natural strict inclusions $\QQ\subsetneq\PP_{\A}^{0}\subsetneq\C_{\A}\subsetneq\A$.
    \end{enumerate}
\end{prop}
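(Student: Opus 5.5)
We have to show three things: $\C_{\A}$ is uncountable; the inclusions $\QQ\subseteq\PP_{\A}^{0}\subseteq\C_{\A}\subseteq\A$ hold; and each of these inclusions is strict.

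\textbf{Uncountability of $\C_{\A}$.} Take $f(X)=X^2-1$. Every element $\bm{t}=(t_p\bmod p)_p$ with $t_p\in\{1,-1\}$ for every $p$ satisfies $f(\bm{t})=0$ in $\A$, so it lies in $\C_{\A}$. Two sign choices $(\epsilon_p)_p$ and $(\epsilon'_p)_p$ give the same element of $\A$ exactly when they agree for all but finitely many $p$; this uses $1\neq -1$ in $\F_p$ for $p>2$. Each equivalence class under "agree up to finitely many primes" is countable. The set of all sign sequences $\{\pm1\}^{\text{primes}}$ has cardinality $2^{\aleph_0}$. Hence there are uncountably many classes, and $\C_{\A}$ is uncountable.

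\textbf{The inclusions.} We have $\QQ\subseteq\PP_{\A}^{0}$ via the constant recurrence $a_n=r$ in \Cref{Thm: Rosen}-(\ref{item:algebraic2}). The inclusion $\PP_{\A}^{0}\subseteq\C_{\A}$ is the first half of \Cref{Thm: Rosen 1.4}.

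\textbf{Strictness.}
\begin{itemize}
\item[(a)] $\QQ\neq\PP_{\A}^{0}$. Take $\bm{t}=(a_p\bmod p)_p$ with $a_n=2^n$, which is in $\PP_{\A}^{0}$ by \Cref{Thm: Rosen}. By Fermat, $2^p\equiv 2\pmod p$, so this element is just $2$. So we need a better example. Use the Galois description (\ref{item:algebraic3}) with $L=\QQ(i)$ and $\phi(\mathrm{id})=1$, $\phi(c)=-1$, where $c$ is complex conjugation; $\phi$ is conjugation-equivariant because the values are rational and the group is abelian. This gives $\bm{t}=(\chi_{-4}(p))_p$. If $\bm{t}=r=a/b$ in $\A$, then for almost all $p$ we would have $b\chi(p)\equiv a\pmod p$. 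Since $\chi$ takes both values $\pm1$ infinitely often (Dirichlet), we get $p\mid(b-a)$ and $p\mid(b+a)$ for infinitely many $p$. That forces $a=b$ and $a=-b$, a contradiction.
\item[(b)] $\PP_{\A}^{0}\neq\C_{\A}$. By uncountability, $\C_{\A}$ cannot equal $\PP_{\A}^{0}$, because $\PP_{\A}^{0}$ is countable: there are countably many rational linear recurrences.
\item[(c)] $\C_{\A}\neq\A$. This is the main point. I would exhibit $\bm{t}$ with $t_p$ chosen to avoid the roots of every nonzero $f$. Enumerate the nonzero polynomials $f_1,f_2,\dots\in\ZZ[X]$. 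For each prime $p$ choose $t_p\in\F_p$ that is not a root of any $f_j$ with $j\le k(p)$, where $f_j\not\equiv0\pmod p$ for those $j$. Here $k(p)\to\infty$ slowly enough that $\sum_{j\le k(p)}\deg f_j<p$, so such a $t_p$ exists. Then for each fixed nonzero $f=f_j$, we get $f(t_p)\not\equiv 0$ for all large $p$, so $f(\bm{t})\neq0$ in $\A$. For rational coefficients, clear denominators; this does not affect the statement for almost all $p$.
\end{itemize}
Hence $\bm{t}\notin\C_{\A}$, and all three inclusions are strict.
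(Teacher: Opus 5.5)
The paper gives no proof of this proposition; it only quotes it from Rosen and Anzawa--Funakura. So there is nothing in the text to match against, and your proposal stands as a self-contained proof. It is correct:
\begin{itemize}
\item the sign-sequence argument gives uncountability of $\C_{\A}$;
\item the character $\chi_{-4}$ gives $\QQ\subsetneq\PP_{\A}^{0}$;
\item countability of rational linear recurrences gives $\PP_{\A}^{0}\subsetneq\C_{\A}$, since $\C_{\A}$ is uncountable;
\item the diagonal construction gives $\C_{\A}\subsetneq\A$.
\end{itemize}
In the last step, make $k(p)$ explicit as the largest $k$ with $\sum_{j\le k}\deg f_j<p$. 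This is finite because infinitely many $f_j$ have positive degree, and $k(p)\to\infty$ because each partial sum is a fixed finite number. Also delete the false start with $a_n=2^n$ in (a) from a final write-up.

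Where you take a less informative route than the material the paper puts right next to this proposition is (b). Your cardinality argument produces no witness, whereas the second half of \Cref{Thm: Rosen 1.4}, which you never use, gives one immediately.
\begin{itemize}
\item Take $f(X)=(X^2-2)(X^2-3)(X^2-6)$. It has a root in $\F_p$ for every prime $p$: for $p\ge 5$ because $\left(\frac{6}{p}\right)=\left(\frac{2}{p}\right)\left(\frac{3}{p}\right)$, and $0$ is a root for $p=2,3$.
\item Choosing a root $t_p$ for each $p$ gives $\bm{t}\in\C_{\A}$ with $f(\bm{t})=0$.
\item Since $f$ has no rational root, \Cref{Thm: Rosen 1.4} forces $\bm{t}\notin\PP_{\A}^{0}$.
\item Varying the signs of the square roots at each $p\ge5$ even yields uncountably many such $\bm{t}$. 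This proves (1) and the middle strict inclusion at once, with concrete elements.
\end{itemize}

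Your version buys independence from Rosen's nontrivial theorem: it uses only countability and the infinitude of primes in each class mod $4$. The explicit version buys actual elements of $\C_{\A}\setminus\PP_{\A}^{0}$, at the cost of relying on \Cref{Thm: Rosen 1.4}.
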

Furthermore, it is also known that the elements of $\PP_{\A}^{0}$ are obtained via the \textit{$\A$-valued Frobenius automorphism} 
\begin{align}
\nonumber
   F_{\A}\colon H^{0}_{dR}(\Spec(L))\otimes_{\QQ}\A\xrightarrow{\sim} H^{0}_{dR}(\Spec(L))\otimes_{\QQ}\A.
\end{align}
Here, $H^{0}_{dR}(\Spec(L))=L$ is the 0-th algebraic de Rham cohomology of 
$\Spec(L)$ and $L$ runs over the number fields (cf. {\cite[Theorem 4.2]{Rosen}}). Therefore, $\PP_{\A}^{0}$ admits some geometric interpretations (the ``$0$'' of $\PP_{\A}^{0}$ comes from the ``$0$''-th cohomology).

It is also an interesting problem to give an element of $\A$ which is not contained in $\PP_{\A}^0$. We call those elements \textit{$\PP_{\A}^{0}$-transcendental numbers}. The first attempt at finding such elements has been given by Luca--Zudilin.

\begin{thm}[{\cite[Theorem 1.3]{LZ2}}]\label{Thm: q-Fib}
    Define the $q$-Fibonacci sequence $(F_{n}(q))_{n}\in\left(\ZZ[q]\right)^{\NN}$ by $F_{0}(q)=0,\ F_{1}(q)=1$ and
    \begin{align}
        \nonumber
        F_{n}(q)=F_{n-1}(q)+q^{n-2}F_{n-2}(q),\quad {}^{\forall}n\in\ZZ_{\geq2}.
    \end{align}
    Then, $(F_{p}(q)\bmod p)_{p}$ is $\PP_{\A}^{0}$-transcendental for all $q\in\ZZ_{>1}$.
\end{thm}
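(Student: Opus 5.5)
The plan is to argue by contradiction, using the characterization in \Cref{Thm: Rosen}-(\ref{item:algebraic3}) to force $F_p(q)$ to be congruent to a fixed rational number for a positive-density set of primes, and then to show that the arithmetic of $F_p(q)\bmod p$ rules this out. So suppose $(F_p(q)\bmod p)_p\in\PP_{\A}^{0}$, and take $L/\QQ$ and $\phi\colon\Gal(L/\QQ)\to L$ as in \Cref{Thm: Rosen}-(\ref{item:algebraic3}). Since $\phi(\mathrm{id})=\phi(\sigma\,\mathrm{id}\,\sigma^{-1})=\sigma\phi(\mathrm{id})$ for all $\sigma$, the value $c:=\phi(\mathrm{id})$ lies in $\QQ$. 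For every prime $p$ that splits completely in $L$ we have $\Frob_{\fp}=\mathrm{id}$. Hence, for all but finitely many such $p$,
\begin{align}
\nonumber
F_p(q)\equiv c \pmod p .
\end{align}
By Chebotarev, this holds on a set $S$ of primes of positive density, and we may enlarge $L$ (for instance by adjoining roots of unity or $q^{1/m}$) to impose extra splitting conditions on $S$.

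Next I would make $F_p(q)\bmod p$ explicit. The $q$-Fibonacci polynomials have the Schur / Rogers--Ramanujan type expansion
\begin{align}
\nonumber
F_n(q)=\sum_{k\ge 0} q^{k^2}\binom{n-k-1}{k}_{q},
\end{align}
which can be checked against the recurrence by induction. Fix $p\nmid q$ and let $d=d_p$ be the multiplicative order of $q$ modulo $p$, so $d\mid p-1$. The $q$-Lucas theorem then reduces $\binom{p-k-1}{k}_q\bmod p$ to an ordinary binomial coefficient in the $d$-adic quotients times a $q$-binomial in the residues modulo $d$. Writing $p-1=md$, the sum for $F_p(q)$ should collapse to an expression depending only on $m$ and on finitely many ``truncated'' Rogers--Ramanujan sums $\sum_{k<d} q^{k^2}\binom{\cdot}{k}_q$, evaluated at $q$ viewed as a primitive $d$-th root of unity in $\F_p$. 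Choosing the extra splitting conditions on $S$ (via the auxiliary fields) pins down $m$, or its residue modulo a fixed integer. The congruence $F_p(q)\equiv c$ then becomes an identity in $\F_p$ between $c$ and a fixed polynomial expression in a primitive $d_p$-th root of unity.

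Finally, I would derive the contradiction from the growth of $d_p$. For all but a density-zero subset of $S$, the order $d_p$ tends to infinity (primes $p$ with $d_p\le D$ divide $\prod_{j\le D}(q^j-1)$, so there are only finitely many). The identity obtained in the previous step is a cyclotomic relation $G(\zeta_{d})\equiv c$ modulo a prime above $p$ in $\QQ(\zeta_d)$, where $G$ has bounded shape. I would lift it to characteristic zero by a norm or height argument: either $G(\zeta_d)-c=0$ in $\QQ(\zeta_d)$, or $p$ divides a nonzero algebraic integer whose norm is too small compared to $p$. The first option should be excluded by using a known closed form of the truncated Rogers--Ramanujan sums at roots of unity (quadratic Gauss-sum type evaluations), which are not constant in $d$.

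The main obstacle is this last step. It requires controlling $d_p$ along a Chebotarev set, and it requires proving non-vanishing of $G(\zeta_d)-c$ while keeping its height small relative to $p$. If the norm bound fails, I would first try restricting to primes with $d_p=(p-1)/m$ for fixed small $m$ (Artin-type primes, handled via the auxiliary splitting fields), so that the root-of-unity evaluation becomes a Gauss-sum expression whose value modulo $p$ can be compared directly with $c$.
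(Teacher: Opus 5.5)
The paper gives no proof of this statement (it is quoted from \cite{LZ2}), so I judge your outline on its own. Step 1 and the $q$-Lucas reduction are sound, and the collapse is in fact exact. Take $\zeta=q\bmod p$ of order $d$, write $p-1=md$ and $k=ad+r$. The terms with $r=0$ contribute $\sum_a\binom{m-a}{a}=\mathrm{Fib}_{m+1}$, and those with $1\le r<d$ contribute $\mathrm{Fib}_m\bigl(F_{d+1}(\zeta)-1\bigr)$. Hence $F_p(q)\equiv \mathrm{Fib}_{m-1}+\mathrm{Fib}_m\,F_{d+1}(\zeta)\pmod p$, where $\mathrm{Fib}_k$ are the ordinary Fibonacci numbers.

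The first gap is Step 3: a norm argument in $\QQ(\zeta_d)$ cannot work. The order $d=d_p$ is unbounded (typically at least of size $\sqrt p/\log p$), so $\varphi(d)$ grows with $p$. Your $G$ is a sum of length about $d$, not of bounded shape, and nothing bounds the norm of $G(\zeta_d)-c$ by anything comparable to $p$. The growth of $d_p$ you invoke works against you. What removes the root of unity is Schur's evaluation, not a Gauss-sum formula. Schur's finite Rogers--Ramanujan identity $F_{n+1}(q)=\sum_j(-1)^jq^{j(5j+1)/2}\binom{n}{\lfloor (n-5j)/2\rfloor}_q$ at $n=d$, combined with $q$-Lucas, gives $F_{d+1}(\zeta)=1$, $0$, or a root $\lambda$ of $x^2-x-1$, according as $d\equiv\pm1$, $\pm2$, $0\pmod 5$. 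So $F_p(q)\bmod p$ equals $\mathrm{Fib}_{m+1}$, $\mathrm{Fib}_{m-1}$, or $\lambda^m$. It depends on $d$ only through $d\bmod 5$, contrary to your expectation, and all the variation sits in the index $m=m_p=(p-1)/d_p$.

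The second, decisive gap is the control of $m_p$. Chebotarev conditions in finite extensions such as $\QQ(\zeta_n,q^{1/n})$ only see $\gcd(m_p,n)$; splitting completely forces $n\mid m_p$, but they never fix $m_p$, nor its residue modulo $n$. Primes of exact index $m$, as in your fallback $d_p=(p-1)/m$, form an Artin-type set. Its positive density is known only under GRH (Hooley's theorem and its index-$m$ variants), so along your route the proof would be conditional, which is precisely what \cite{LZ2} avoids.

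An unconditional finish needs a counting step in the spirit of \Cref{lem: counting lemma}:
\begin{enumerate}
\item Choose $n$ so large that $\mathrm{Fib}_k\neq c$ for all $k\ge n-1$, and work with the positive-density set of primes splitting completely in $L(\zeta_n,q^{1/n})$.
\item Since $\bigl(\tfrac{1\pm\sqrt5}{2}\bigr)^m\notin\QQ$ for $m\ge1$, every large such $p$ divides a nonzero integer $N_{m_p}$. This is the numerator of $\mathrm{Fib}_{m_p\pm1}-c$, or of the norm from $\QQ(\sqrt5)$ of $\bigl(\tfrac{1\pm\sqrt5}{2}\bigr)^{m_p}-c$.
\item Since $\omega(N_m)\ll m$, at most $O(M^2)$ of these primes have $m_p\le M$.
\end{enumerate}
To make this $o(X/\log X)$ you still need an upper bound on $m_p$ for almost all $p$. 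One option is the Erd\H{o}s--Murty theorem that $\mathrm{ord}_p(q)>p^{1/2+\epsilon(p)}$ for almost all $p$ (with $\epsilon(p)\to0$ slowly), which gives $m_p\le X^{1/2-\epsilon}$ outside a density-zero set. Nothing in your outline supplies this.
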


\begin{rem}
    Anzawa--Funakura's result (\cite[Theorem 1.2]{AF}) is also important, as they were the first to obtain the result about \Cref{Thm: q-Fib}. They proved the ``$\C_{\A}$-transcendence'' of $(F_{p}(q)\bmod p)_{p}$ for all square-free $q>1$, under the generalized Riemann hypothesis, earlier than Luca--Zudilin (\cite{LZ2}). Here, we note that the two results should be understood separately, as they differ in both their conditions, conclusions, and methods of proof.

    For $\C_{\A}$-transcendence, we refer to the paper by Matsusaka and Seki (\cite{MS}). The authors succeeded in proving that $(F_{p}(q)\bmod p)_{p}$ is $\C_{\A}$-transcendental for all $q\in\ZZ_{>1}$. It is also a generalization of both results in \cite{AF,LZ2}. They also carried out other investigations about $\C_{\A}$-transcendence in \cite{MS}.
\end{rem}

After \Cref{Thm: q-Fib}, Luca and Zudilin obtained new $\PP_{\A}^{0}$-transcendental elements by using elliptic curves:

\begin{thm}[{\cite[Theorem 1]{LZ1}}]\label{Thm: LZ}
  Let $E$ be an elliptic curve over $\QQ$ without complex multiplication. For any prime $p$, we define
  \begin{align}
  \nonumber
      a_p(E)=\begin{cases}
           p+1-\# E(\F_p)  & \text{if}\ E\ \text{has good reduction at}\ p,\\ 
           0 & \text{otherwise}.
      \end{cases}
  \end{align}
  Then, the sequence of Frobenius traces $\bm{a}(E)=(a_{p}(E)\bmod p)_{p}$ is $\PP_{\A}^{0}$-transcendental.
\end{thm}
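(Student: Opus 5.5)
We argue by contradiction, using the Galois description of $\PP_{\A}^{0}$ in \Cref{Thm: Rosen}-(\ref{item:algebraic3}) together with Serre's open image theorem. Suppose $\bm{a}(E)\in\PP_{\A}^{0}$. Then there are a finite Galois extension $L/\QQ$ and a map $\phi\colon\Gal(L/\QQ)\to L$ with $\phi(\sigma\tau\sigma^{-1})=\sigma\phi(\tau)$, such that $a_p(E)\equiv\phi(\Frob_\fp)\bmod\fp$ for all but finitely many $p$.

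\textbf{Step 1: the value at the identity is rational.} Take $\tau=\mathrm{id}$. The equivariance gives $\phi(\mathrm{id})=\sigma\phi(\mathrm{id})$ for every $\sigma$, so $r:=\phi(\mathrm{id})\in\QQ$. Write $r=u/v$ in lowest terms with $v>0$.

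\textbf{Step 2: $a_p(E)$ is constant on split primes.} Let $p$ split completely in $L$, so $\Frob_\fp=\mathrm{id}$. For all but finitely many such $p$ the hypothesis gives $v\,a_p(E)\equiv u \pmod p$. By Hasse's bound, $|v\,a_p(E)-u|\le 2v\sqrt{p}+|u|<p$ once $p$ is large. Hence $a_p(E)=r$ for all but finitely many primes splitting completely in $L$. In particular $r\in\ZZ$, and we may take $v=1$.

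\textbf{Step 3: produce split primes with $a_p(E)\neq r$.} Since $E$ has no CM, Serre's theorem gives a prime $\ell$ such that:
\begin{itemize}
\item $\ell>\max(5,[L:\QQ])$;
\item $\bar\rho_{E,\ell}\colon\Gal(\QQbar/\QQ)\to\GL_2(\F_\ell)$ is surjective, so $\Gal(\QQ(E[\ell])/\QQ)\cong\GL_2(\F_\ell)$.
\end{itemize}
Let $K=L\cap\QQ(E[\ell])$ and let $N=\Gal(\QQ(E[\ell])/K)$. Then $N$ is a normal subgroup of $\GL_2(\F_\ell)$, and $\GL_2(\F_\ell)/N$ is a quotient of $\Gal(L/\QQ)$, so its order is at most $[L:\QQ]<\ell$. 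I claim $N\supseteq\mathrm{SL}_2(\F_\ell)$:
\begin{itemize}
\item The image of $\mathrm{SL}_2(\F_\ell)$ in $\GL_2(\F_\ell)/N$ has order less than $\ell$.
\item Every proper normal subgroup of $\mathrm{SL}_2(\F_\ell)$ lies in $\{\pm1\}$, since $\mathrm{PSL}_2(\F_\ell)$ is simple for $\ell\ge5$.
\item So the image is trivial (it cannot be of order $|\mathrm{PSL}_2(\F_\ell)|$ or more), giving $\mathrm{SL}_2(\F_\ell)\subseteq N$.
\end{itemize}
Now pick $t\in\F_\ell$ with $t\neq r\bmod\ell$. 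The matrix $g=\begin{pmatrix}t&-1\\1&0\end{pmatrix}$ lies in $\mathrm{SL}_2(\F_\ell)\subseteq N$.
\begin{itemize}
\item Because $L$ and $\QQ(E[\ell])$ are linearly disjoint over $K$, we have $\Gal(L\QQ(E[\ell])/\QQ)\cong\Gal(L/\QQ)\times_{\Gal(K/\QQ)}\GL_2(\F_\ell)$, which contains $(\mathrm{id},g)$.
\item Chebotarev's density theorem for $L\QQ(E[\ell])/\QQ$ then gives infinitely many primes $p\nmid\ell N_E$ that split completely in $L$ and satisfy $a_p(E)\equiv\tr\bar\rho_{E,\ell}(\Frob_p)=t\not\equiv r \pmod{\ell}$.
\end{itemize}
These primes have $a_p(E)\neq r$, contradicting Step 2. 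Hence $\bm{a}(E)\notin\PP_{\A}^{0}$.

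\textbf{Main obstacle.} The delicate point is Step 3: showing that $L$ cannot absorb the mod-$\ell$ Galois image, i.e.\ the group-theoretic claim $N\supseteq\mathrm{SL}_2(\F_\ell)$ for large $\ell$. An alternative route here is Serre's density-zero result for $\{p:a_p(E)=r\}$ in the non-CM case, derived from the open $\ell$-adic image: the locus where the trace equals $r$ is a proper analytic subset and so has Haar measure zero. This contradicts Step 2 directly, because primes splitting completely in $L$ have positive density $1/[L:\QQ]$.
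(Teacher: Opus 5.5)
Your proof is correct, but its decisive step is different from the paper's. The paper obtains this statement as case (i) of the proof of \Cref{Main 1}, and your Steps 1 and 2 coincide with that argument: restrict to primes splitting completely in $L$, observe that $\phi(\mathrm{id})\in\QQ$, and use the Hasse bound to force $a_p(E)=b$ for all large such primes.

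The two proofs part ways in the final step. The paper finishes with the Sato--Tate theorem of Barnet-Lamb--Geraghty--Harris--Taylor. The equality $a_p(E)=b$ forces $\theta_p(E)\to\pi/2$, and the Sato--Tate measure has no atoms, so $\{p : a_p(E)=b\}$ has density $0$. This contradicts $\mathrm{den}(S_1)=1/[L:\QQ]>0$.

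You instead use Serre's open image theorem at a single prime $\ell>[L:\QQ]$. You show that $L$ cannot absorb the mod-$\ell$ image, so Chebotarev in $L\QQ(E[\ell])$ gives a positive density of split primes with any prescribed value of $a_p(E)\bmod\ell$. This is essentially the Luca--Zudilin route, which the paper deliberately avoids. It is also parallel to the paper's own use of a surjective $\ell$-adic image in the proof of \Cref{Main 2}.

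Your route buys a much lighter input: Serre plus Chebotarev, with no potential automorphy. It also gives a stronger conclusion, since the traces at split primes meet every residue class mod $\ell$. The paper's route buys uniformity: it handles CM curves as well, via Hecke's equidistribution and Deuring's criterion. Your $\mathrm{SL}_2$ argument cannot do this, because in the CM case the mod-$\ell$ image is only the normalizer of a Cartan subgroup.

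One small point needs care. The claim that every proper normal subgroup of $\mathrm{SL}_2(\F_\ell)$ lies in $\{\pm1\}$ uses slightly more than simplicity of $\mathrm{PSL}_2(\F_\ell)$; you also need, for example, that $\mathrm{SL}_2(\F_\ell)$ is perfect. You can bypass this entirely: $\mathrm{SL}_2(\F_\ell)$ is generated by transvections of order $\ell$, and these must map trivially into $\GL_2(\F_\ell)/N$, a group of order less than $\ell$. This gives $\mathrm{SL}_2(\F_\ell)\subseteq N$ without needing $\ell\ge 5$.
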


Our first main result is an extension of Luca--Zudilin's result, which removes the condition ``complex multiplication''.

\begin{mainthm}\label{Main 1}
  Let $E$ be an elliptic curve over $\QQ$. Then, the sequence of Frobenius traces $\bm{a}(E)=(a_{p}(E)\bmod p)_{p}$ is $\PP_{\A}^{0}$-transcendental. 
\end{mainthm}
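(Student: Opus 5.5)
Since \Cref{Thm: LZ} is attributed to Luca--Zudilin and the abstract says their work covered the CM case, I will assume the CM case is already known from \cite{LZ1} and treat only the non-CM case. (If instead \cite{LZ1} covers non-CM curves, the same argument works for CM curves by replacing Serre's open image theorem with the structure of the image of an ideal class character.)

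The approach is by contradiction, reducing to the vanishing density of primes with a prescribed Frobenius trace. Suppose $\bm{a}(E)\in\PP_{\A}^{0}$. By \Cref{Thm: Rosen}-(\ref{item:algebraic3}) there are a finite Galois extension $L/\QQ$ and a map $\phi$ as there, with $a_p(E)\equiv \phi(\Frob_\fp)\bmod \fp$ for all but finitely many $p$.

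\textbf{Step 1: reduce to a constant trace on split primes.} Restrict to the primes $p$ that split completely in $L$. For these, $\Frob_\fp=1$, so $a_p(E)\equiv c \bmod \fp$ with $c=\phi(1)$. The conjugation relation with $\tau=1$ gives $\sigma(c)=\phi(1)=c$ for all $\sigma$, hence $c\in\QQ$. Both $a_p(E)$ and $c$ are then $p$-integral rationals for large $p$, so $a_p(E)\equiv c \bmod p$. The Hasse bound $|a_p(E)|\le 2\sqrt p$ gives $|a_p(E)-c|<p$ for $p$ large, and therefore $a_p(E)=c$ exactly. By Chebotarev, the set $S=\{p : a_p(E)=c\}$ thus contains all sufficiently large primes splitting completely in $L$. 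In particular $S$ has lower density at least $1/[L:\QQ]>0$.

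\textbf{Step 2: show $S$ has density zero.} Fix a prime $\ell$ and let $\rho_{\ell^n}\colon G_\QQ\to\GL_2(\ZZ/\ell^n)$ be the mod $\ell^n$ representation on $E[\ell^n]$. For $p\nmid \ell N_E$ we have $\tr\rho_{\ell^n}(\Frob_p)\equiv a_p(E)\bmod \ell^n$. Hence $S$ is contained, up to finitely many primes, in the set of $p$ with $\tr\rho_{\ell^n}(\Frob_p)\equiv c \bmod \ell^n$. Chebotarev in $\QQ(E[\ell^n])$ then bounds the upper density of $S$ by
\begin{align}
\nonumber
\delta_n=\frac{\#\{g\in G_n : \tr g\equiv c \bmod \ell^n\}}{\# G_n},
\end{align}
where $G_n$ is the image of $\rho_{\ell^n}$.

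\textbf{Step 3: the counting estimate (main obstacle).} Since $E$ has no CM, Serre's open image theorem says the $\ell$-adic image $G$ is open in $\GL_2(\ZZ_\ell)$. So $G\supseteq 1+\ell^k M_2(\ZZ_\ell)$ for some $k$, and each $G_n$ with $n>k$ is a union of cosets of $K_n=\Ker(\GL_2(\ZZ/\ell^n)\to\GL_2(\ZZ/\ell^k))$. I would estimate $\delta_n$ coset by coset. On a coset $gK_n$, the trace is $\tr g+\ell^k\tr(gX)$ with $X$ running over $M_2(\ZZ/\ell^{n-k})$. Since $g$ is invertible, $X\mapsto\tr(gX)$ is a surjective linear form, so it is equidistributed on $\ZZ/\ell^{n-k}$. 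The congruence $\tr\equiv c\bmod\ell^n$ therefore holds on at most a proportion $\ell^{-(n-k)}$ of each coset. This gives $\delta_n\le \ell^{k-n}\to 0$, so $S$ has density $0$, contradicting Step 1.

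The only delicate point I foresee is precisely this open-image input in Step 3: without CM it is supplied by Serre, and this is exactly what fails in the CM case, which is why that case is taken from \cite{LZ1}.
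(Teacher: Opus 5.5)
\textbf{The gap: you have proved the known case and left out the new one.} Your Steps 1--3 are a correct proof for $E$ \emph{without} CM. They are essentially Serre's open-image plus Chebotarev argument that $\{p : a_p(E)=c\}$ has density $0$ for every fixed $c$, which is the route of \cite{LZ1}; the paper instead uses Sato--Tate and Elkies. However, you have the two cases the wrong way round. \Cref{Thm: LZ} is stated for elliptic curves \emph{without} complex multiplication, whatever the abstract's phrasing suggests, and \Cref{Main 1} is presented as removing exactly that hypothesis. So you have proved the already known case. The new content of the theorem, $E$ with CM by an imaginary quadratic field $K$, is covered only by your parenthetical, and that remark is false as stated.

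\textbf{Why the parenthetical fails, and what is missing.} It breaks down at $c=0$, which nothing in your Step 1 rules out. For a CM curve every good prime inert in $K$ is supersingular, so $a_p(E)=0$ on a set of primes of density $1/2$. On the Galois side, the $\ell$-adic image lies in the normalizer of a Cartan subgroup and every element of the nontrivial coset has trace $0$. Hence your $\delta_n$ is at least $1/2$ for every $n$, and Step 3 gives no contradiction. The missing idea is to prove $c\neq 0$ first, which is the role of \Cref{lem: existence of primes} in the paper. By Deuring's criterion, a good prime $p$ splits in $K$ if and only if $a_p(E)\not\equiv 0 \pmod p$. By Chebotarev, the primes splitting completely in $KL$ have positive density, and they are among your primes split in $L$. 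For them $c\equiv a_p(E)\not\equiv 0\pmod p$, which forces $c\neq 0$.

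Only after that can you conclude that $\{p : a_p(E)=c\}$ has density $0$. The paper does this with Hecke's equidistribution, the CM case of \Cref{Thm: Sato--Tate conj}, where $c\neq 0$ avoids the atom at $\pi/2$. Within your own framework, take $\ell$ odd, prime to the denominator of $c$ and to the conductor of the CM order. For $n$ large the non-Cartan coset then contributes nothing, since $c\not\equiv 0 \pmod{\ell^n}$. On the Cartan part, which is an open subgroup of $(\OO_K\otimes\ZZ_\ell)^\times$, the trace is $\mathrm{Tr}_{K/\QQ}$, a surjective linear form. Your coset count then again gives a proportion $O(\ell^{-n})$.
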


\begin{rem}
    Although \Cref{Main 1} overlaps with \cite[Theorem 4.1]{MS} and both results require the Sato--Tate conjecture to handle the CM case, these proofs are different from each other.
   
    In \cite{MS}, they proved the $\C_\A$-transcendence of $\bm{a}(E)$ by formulating and applying a criterion for $\C_\A$-transcendence based on the proof in \cite[Theorem 1]{LZ1}. They referred to this criterion as the \textit{Luca--Zudilin criterion} (cf. {\cite[Lemma 2.5]{MS}}). Although elementary, it serves as a highly effective criterion with various applications.

    On the other hand, in the present paper, we focus only on rational primes that split completely in certain finite Galois extensions over $\QQ$ and use the Chebotarev density theorem to deduce a contradiction. In addition, our proof employs classical theorems on elliptic curves, such as \textit{Deuring's reduction theorem}. Consequently, these two approaches are conceptually distinct: while the proof in \cite{MS} relies on a general, elementary criterion for sequences, our argument is inherently grounded in the algebraic and geometric properties of elliptic curves.
\end{rem}
By using techniques in the proof of \Cref{Main 1}, we give certain $\PP_{\A}^{0}$-transcendental elements as follows:

\begin{mainthm}\label{Main 2}
   Let $A$ be an abelian variety over $\QQ$. Assume that $A$ satisfies the conditions in \Cref{Thm: Serre open image}; that is, the $\ell$-adic Galois representation
     \begin{align}
    \nonumber
    \rho_{A,\ell}\colon G_\QQ\rightarrow \GSp_{2g}(\ZZ_{\ell})
    \end{align}
    is surjective for all but finitely many primes $\ell$. For any prime $p$, we define
    \begin{align}
    \nonumber
        a_p(A)=\begin{cases}
            \tr(\rho_{A,\ell}(\Frob_{p})) & \text{if}\ A\ \text{has good reduction at}\ p,\\
            0 & \text{otherwise.}
        \end{cases}
    \end{align}
    Then, the sequence of Frobenius traces $\bm{a}(A)=(a_{p}(A)\bmod p)_{p}$ is $\PP_{\A}^{0}$-transcendental.
\end{mainthm}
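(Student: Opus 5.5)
Suppose, for contradiction, that $\bm{a}(A)\in\PP_{\A}^{0}$. By \Cref{Thm: Rosen}-(\ref{item:algebraic3}) there is a finite Galois extension $L/\QQ$ and a conjugation-equivariant map $\phi\colon\Gal(L/\QQ)\to L$ such that $a_p(A)\equiv\phi(\Frob_{\fp})\bmod\fp$ for all but finitely many $p$. My plan is to restrict to primes $p$ that split completely in $L$. For these, $\Frob_{\fp}=1$, so $a_p(A)\equiv c\bmod p$ for the fixed element $c=\phi(1)\in\QQ$; equivariance forces $\phi(1)$ to be fixed by every $\sigma$. Clearing denominators, write $c=u/v$. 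Then $v\,a_p(A)-u\equiv 0\bmod p$ for all but finitely many $p$ that split completely in $L$. The Weil bound gives $|a_p(A)|\le 2g\sqrt p$, so $|v a_p(A)-u|<p$ for $p$ large. Hence the congruence becomes an equality $a_p(A)=c$, and $c$ must then be an integer. So it suffices to show that the set of primes $p$ splitting completely in $L$ with $a_p(A)=c$ has density strictly smaller than $1/[L:\QQ]$, since Chebotarev gives exactly that density to the split primes.

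For that density statement I would use the open image hypothesis. Choose a prime $\ell$ that is large, does not divide $c$ or the discriminant of $L$, and for which $\rho_{A,\ell}$ is surjective. Consider the mod-$\ell$ representation $\bar\rho\colon G_\QQ\to\GSp_{2g}(\F_\ell)$, which is also surjective, and let $K=\QQ(A[\ell])$. The key claim is that $K$ and $L$ are nearly linearly disjoint. The intersection $K\cap L$ is unramified at $\ell$ outside a controlled part, since $L$ is unramified at $\ell$. It therefore corresponds to a quotient of $\GSp_{2g}(\F_\ell)$ through which the cyclotomic character does not factor. For large $\ell$ the only abelian quotients of $\GSp_{2g}(\F_\ell)$ come from the multiplier, and $\mathrm{PSp}_{2g}(\F_\ell)$ is simple with order exceeding $[L:\QQ]$. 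So $K\cap L=\QQ$, and $\Gal(KL/\QQ)\cong\GSp_{2g}(\F_\ell)\times\Gal(L/\QQ)$. By Chebotarev on $KL$, the density of primes that split in $L$ and have $\tr\bar\rho(\Frob_p)\equiv c\bmod\ell$ equals
\[
\frac{1}{[L:\QQ]}\cdot\frac{\#\{M\in\GSp_{2g}(\F_\ell):\tr M=c\}}{\#\GSp_{2g}(\F_\ell)}.
\]
A standard count shows the second factor is about $1/\ell<1$. This gives the required strict inequality, and the contradiction follows.

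The hardest step, I expect, is the linear disjointness of $K$ and $L$ for a well-chosen $\ell$, together with handling small $g$ and degenerate cases. If the disjointness argument is delicate, I would first try the weaker route of needing only that the image of $\Gal(\overline{\QQ}/L)$ under $\bar\rho$ contains $\mathrm{Sp}_{2g}(\F_\ell)$, using Goursat-type arguments and the simplicity of $\mathrm{PSp}_{2g}(\F_\ell)$. Any coset of $\mathrm{Sp}_{2g}$ inside $\GSp_{2g}$ contains elements with trace $\neq c$ in a positive proportion, which suffices. I also need to check two things: the bad-reduction primes and the primes dividing denominators of $\phi$ are finitely many, and every $\fp\mid p$ gives the same residue, which is the well-definedness in Rosen's remark.
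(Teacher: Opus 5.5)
Your proof is correct, but it is not the paper's route for this theorem. It is essentially the paper's proof of Main Theorem~\ref{Main 1} carried over to abelian varieties, with Sato--Tate replaced by open image plus Chebotarev.

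The paper never restricts to primes splitting completely in $L$ and never isolates $\phi(1)$. It fixes one large $\ell$ with $\rho_{A,\ell}$ surjective, $\ell\nmid N$, and $\ell\nmid N_{L/\QQ}(Nb_i)$ for $b_i\neq0$. It then takes the set $S_\ell$ of good primes $p$ with $0\neq a_p(A)\equiv 0\pmod{\ell}$. This set has positive density by Chebotarev together with \Cref{lem: density of S}, which in turn rests on the Lang--Weil count of \Cref{Hm}. On $S_\ell$ one has $a_p(A)\notin\{b_1,\dots,b_k\}$. The paper concludes with the Luca--Zudilin count of \Cref{lem: counting lemma}: each such $p$ divides a nonzero norm $N_{L/\QQ}(Na_p(A)-Nb_i)$ of size $O(X^{[L:\QQ]/2})$, so there are only $O(\sqrt{X}\log X)$ such primes up to $X$, contradicting positive density.

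The paper's method is a general criterion. It needs only a positive-density set of primes avoiding the $b_i$ and a bound $|a_p|\ll p^{\theta}$ with $\theta<1$. Your method instead exploits the fact that on split primes the putative element of $\PP_{\A}^{0}$ is a single rational constant $c$. The Weil bound then turns the congruence into the equality $a_p(A)=c$. Your linear disjointness of $\QQ(A[\ell])$ and $L$ lets you replace Lang--Weil by a triviality: $I$ or $-I$ has trace $\not\equiv c \pmod{\ell}$ once $\ell>4g$. The price is the group theory of $\GSp_{2g}(\F_\ell)$.

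Some remarks to tighten your write-up:

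\begin{itemize}
\item The disjointness is dispensable. The set $\{p: a_p(A)=c\}$ has upper density at most
$\#\{M\in\GSp_{2g}(\F_\ell):\tr M=c\}/\#\GSp_{2g}(\F_\ell)=O(1/\ell)$.
This follows from the argument of \Cref{Hm} and \Cref{lem: density of S} with $0$ replaced by $c$. For large $\ell$ this is $<1/[L:\QQ]$, which plays exactly the role Sato--Tate plays for elliptic curves.
\item If you keep disjointness, state the group theory cleanly. A quotient of $\GSp_{2g}(\F_\ell)$ of order at most $[L:\QQ]<\#\mathrm{PSp}_{2g}(\F_\ell)$ must kill $\mathrm{Sp}_{2g}(\F_\ell)$, because for $\ell\ge5$ the only normal subgroups of $\mathrm{Sp}_{2g}(\F_\ell)$ are $1$, $\{\pm1\}$ and itself. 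Hence the quotient factors through the multiplier, and so $\QQ(A[\ell])\cap L\subseteq\QQ(\zeta_\ell)$. This intersection is $\QQ$, since every nontrivial subfield of $\QQ(\zeta_\ell)$ ramifies at $\ell$ while $L$ does not. Your phrase ``a quotient through which the cyclotomic character does not factor'' states this logic backwards.
\item Drop the condition $\ell\nmid c$. It is never used, and it is impossible to satisfy when $c=0$.
\end{itemize}
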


\begin{rem}\label{remark: good reduction Galois representation}
    In general, $A$ has good reduction for all but finitely many $p$ by Serre and Tate (\cite[Theorem 1]{ST}). It is known that the characteristic polynomial of Frobenius has integral coefficients and does not depend on the choice of $p$ (cf. \cite{Del}). Thus, $\bm{a}(A)$ is well-defined as an element of $\A$.
\end{rem}

\subsection*{Notation of this paper}
Throughout this paper, we use the following standard notation.
\begin{itemize}
    \item $p,\ \ell$ denote distinct prime numbers. 
    \item For a number field $L$, we write its ring of integers as $\OO_L$.
    \item For a number field $K$, $\Kbar$ denotes a fixed algebraic closure of $K$.
    \item For a number field $K$, we denote the absolute Galois group $\Gal(\Kbar/K)$ by $G_K$.  
    \item For a subset of primes $S$, we write the natural density as $\text{den}(S)=\lim_{X\to\infty}\frac{\#\{ p\leq X\ |\ p\in S \}}{\#\{ p\leq X\}}$.
\end{itemize}

To state the asymptotic behavior of functions, we use the symbols $O$, $\ll$ $o$, and $\sim$ defined as follows.
\begin{itemize}
    \item $f(x) = O(g(x))$ if there exists a positive constant $C$ such that $|f(x)| \leq C g(x)$ for sufficiently large $x$. We also use the symbol $f(x) \gg g(x)$, which is equivalent to $f(x) = O(g(x))$ and $g(x) = O(f(x))$.
    \item $f(x) = o(g(x))$ if $f(x)/g(x)\to0\ (x \to \infty)$.
    \item $f(x) \sim g(x)$ if $f(x)/g(x)\to1\ (x \to \infty)$.
\end{itemize}
When the above notation depends on some parameters $\epsilon$, we may use this dependence by adding subscripts, such as $\ll_{\epsilon}$.

\section*{Acknowledgements}
This work originated from an informal private seminar held under the kind guidance of Professor Takuya Yamauchi at Tohoku University. The author would like to express his deepest gratitude to Professor Yamauchi for providing the opportunity to discuss this work and for his fruitful advice and insightful remarks.

The author is also deeply grateful to Professors Toshiki Matsusaka, Shin-ichiro Seki, Yasuo Ohno, and Wadim Zudilin for their careful reading of the present paper and for many helpful suggestions. The author would like to thank Mahiro Yokomizo for valuable comments on a previous version of this paper.

Finally, the author would like to thank Tohoku University for its incredible hospitality during the course of this research project.

\section{Transcendence for the Frobenius traces of elliptic curves}\label{section: fin. trans. ell. case}
In \cite{LZ1}, Luca and Zudilin proved \Cref{Thm: LZ} by using \textit{Serre's open image theorem}, stated as follows:

\begin{thm}[{\cite[Section 4.4]{Serre}}]
For any non-CM elliptic curve $E$ over $\QQ$, the $\ell$-adic Galois representation 
\begin{align}
\nonumber
\rho_{E,\ell}\colon G_\QQ\to\Aut_{\ZZ_\ell}(T_\ell E)\simeq\GL_{2}(\ZZ_{\ell})
\end{align}
is surjective for all but finitely many primes $\ell$. Here, $T_\ell E$ is the $\ell$-adic Tate module of $E$. 
\end{thm}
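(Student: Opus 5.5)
The plan is to follow Serre's strategy. First reduce $\ell$-adic surjectivity to surjectivity modulo $\ell$. Then rule out each proper maximal subgroup of $\GL_2(\F_\ell)$ for $\ell$ large, using the local behaviour of $\rho_{E,\ell}$ at $\ell$ together with global finiteness statements. Since $E$ is non-CM, $\End_{\overline{\QQ}}(E)=\ZZ$; this is the only input that distinguishes the present case from the CM case, where the theorem is false.

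\textbf{Step 1 (from mod $\ell$ to $\ell$-adic).} For $\ell\ge 5$, a closed subgroup $H\subset \mathrm{SL}_2(\ZZ_\ell)$ whose reduction is all of $\mathrm{SL}_2(\F_\ell)$ equals $\mathrm{SL}_2(\ZZ_\ell)$. To see this, lift an element of order $\ell$ and raise it to the $\ell$-th power; one checks that $H$ meets $1+\ell M_2(\ZZ_\ell)$ in a subgroup reducing onto $\mathfrak{sl}_2(\F_\ell)$, and then one proceeds by induction on the level. The determinant $\det\rho_{E,\ell}$ is the $\ell$-adic cyclotomic character, which is surjective onto $\ZZ_\ell^\times$ because $\QQ(\mu_{\ell^\infty})/\QQ$ has Galois group $\ZZ_\ell^\times$. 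Hence it suffices to show that the mod $\ell$ representation $\bar\rho_{E,\ell}\colon G_\QQ\to\GL_2(\F_\ell)$ is surjective for all but finitely many $\ell$.

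\textbf{Step 2 (subgroup classification).} By Dickson's classification, if $\bar\rho_{E,\ell}(G_\QQ)\neq\GL_2(\F_\ell)$ and $\ell\ge 5$, then the image is contained in one of the following:
\begin{itemize}
\item[(a)] a Borel subgroup;
\item[(b)] the normalizer of a split or non-split Cartan subgroup;
\item[(c)] a subgroup whose image in $\mathrm{PGL}_2(\F_\ell)$ is $A_4$, $S_4$ or $A_5$.
\end{itemize}
The local input is the action of the inertia group $I_\ell$. Take $\ell$ prime to the conductor, so that $E$ has good reduction at $\ell$. The semisimplified action of tame inertia is then given by fundamental characters of level $1$ (powers $\chi_\ell^{0},\chi_\ell^{1}$ of the cyclotomic character) or of level $2$ (Raynaud). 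In either case the image of $I_\ell$ in $\mathrm{PGL}_2(\F_\ell)$ contains a cyclic group of order at least $\ell-1$. This excludes case (c) once $\ell>60$.

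\textbf{Step 3 (Cartan normalizers).} In case (b), composing with the quotient by the Cartan subgroup gives a quadratic character $\varepsilon$ of $G_\QQ$. By the inertia description in Step 2, $\varepsilon$ is unramified at $\ell$ for $\ell$ large. So $\varepsilon$ is unramified outside the bad primes of $E$, and therefore ranges over a finite set independent of $\ell$. For each such $\varepsilon$, I would fix one prime $p$ of good reduction with $\varepsilon(\Frob_p)=-1$ and $a_p(E)\neq 0$. Such a prime exists because, for non-CM $E$, supersingular primes have density $0$; this is a consequence of the $\ell$-adic image being open, which one gets first from Serre's Lie-algebra argument together with Faltings' isogeny theorem. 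Elements of $\GL_2(\F_\ell)$ lying in the Cartan normalizer but outside the Cartan subgroup have trace $0$, so $\ell\mid a_p(E)$. Since $0<|a_p(E)|\le 2\sqrt p$, only finitely many $\ell$ can occur. If the image lies in the Cartan subgroup itself, the representation is reducible over $\F_{\ell^2}$ and is handled as in case (a).

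\textbf{Step 4 (Borel case) — main obstacle.} If $\bar\rho_{E,\ell}$ is reducible for infinitely many $\ell$, then $E$ has rational $\ell$-isogenies for infinitely many $\ell$. This gives infinitely many pairwise non-isomorphic curves $E/C_\ell$ over $\QQ$ isogenous to $E$, all with the same set of bad primes. Shafarevich's theorem, or the finiteness of the isogeny class from Faltings, rules this out unless $E$ has CM.

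This step is where I expect the real difficulty, since it is where the non-CM hypothesis enters in an essential way. I would first try Serre's original route:
\begin{itemize}
\item If $j(E)\notin\ZZ$, Tate curve theory at a prime of multiplicative reduction gives an element of inertia acting unipotently and non-trivially for $\ell\nmid v_p(j)$. Combined with the level-$1$ inertia characters at $\ell$, this bounds $\ell$ directly.
\item If $j(E)\in\ZZ$, one needs the finiteness of the isogeny class. I would simply invoke Faltings for this, accepting it as the deepest input.
\end{itemize}
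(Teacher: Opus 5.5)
The paper gives no proof of this statement. It is quoted from Serre as background, and Section 2 is designed precisely to avoid using it, so there is no internal argument to compare against. Your outline is a sound reconstruction of Serre's argument: his lifting lemma from $\mathrm{SL}_2(\F_\ell)$, Dickson's classification, Raynaud's description of tame inertia at $\ell$, and the quadratic character $\varepsilon$ together with the trace-zero observation for Cartan normalizers.

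Your Shafarevich argument in Step 4 is already complete. Over $\QQ$ it does not even need the non-CM hypothesis, since $\End_{\QQ}(E)=\ZZ$. A $\QQ$-isomorphism $E/C_\ell\cong E/C_{\ell'}$ with $\ell\neq\ell'$ would give a $\QQ$-rational endomorphism of non-square degree $\ell\ell'$, which is impossible. So the Tate-curve/Faltings fallback you sketch afterwards is unnecessary.

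Two places need tightening.

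\textbf{Step 1.} Surjectivity of $G=\rho_{E,\ell}(G_\QQ)$ onto $\GL_2(\F_\ell)$ does not immediately give that $G\cap\mathrm{SL}_2(\ZZ_\ell)$ reduces onto $\mathrm{SL}_2(\F_\ell)$. A lift to $G$ of an element of $\mathrm{SL}_2(\F_\ell)$ need not have determinant $1$. Apply the lifting lemma to the closure of $[G,G]$ instead. Its reduction is $[\GL_2(\F_\ell),\GL_2(\F_\ell)]=\mathrm{SL}_2(\F_\ell)$ for $\ell\ge 5$. Then use $\det G=\ZZ_\ell^\times$ to conclude $G=\GL_2(\ZZ_\ell)$.

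\textbf{Step 3.} The subcase where the image lies in a non-split Cartan cannot be ``handled as in case (a)''. Reducibility over $\F_{\ell^2}$ produces no $\QQ$-rational $\ell$-isogeny, so the Shafarevich argument does not apply, and your $\varepsilon$-argument is empty because $\varepsilon$ is trivial. Use complex conjugation $c$ instead. Since $\det\bar\rho_{E,\ell}(c)=\chi_\ell(c)=-1$ and $\bar\rho_{E,\ell}(c)^2=1$, the element $\bar\rho_{E,\ell}(c)$ has the distinct eigenvalues $1,-1\in\F_\ell$. An element of a non-split Cartan subgroup whose eigenvalues lie in $\F_\ell$ must be scalar, so $\bar\rho_{E,\ell}(c)$ is not in that Cartan for $\ell$ odd.

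With these two fixes your argument is a complete proof of the statement, modulo the deep inputs you cite: Raynaud, Faltings (for density zero of supersingular primes), and Shafarevich.
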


Our proof proceeds without using the above theorem, focusing instead on primes that split completely in certain finite Galois extensions over $\QQ$ and applying the \textit{Sato--Tate conjecture}. Here we summarize the Sato--Tate conjecture. The CM case is due to Hecke and we refer to \cite{Murty}. The non-CM case is due to a celebrated work \cite{BGHT}. For an elliptic curve $E/\QQ$ and a prime $p$ such that $E$ has good reduction at $p$, we choose $\theta_p(E)\in [0,\pi]$ satisfying $\cos{\theta_{p}(E)}:=a_{p}(E)/2\sqrt{p}$. We call such a prime $p$ a good prime for $E$. 

\begin{thm}[Sato--Tate conjecture, \citep{BGHT, Murty}]\label{Thm: Sato--Tate conj}
Let $E$ be an elliptic curve over $\QQ$. 
Then, the following holds.
\begin{enumerate}
\item \label{item:Sato--Tate nonCM} When $E$ is non-CM, then for $0\leq \alpha<\beta\leq\pi$, it holds
\begin{align}
\nonumber
  \lim_{X\to\infty}\frac{\#\left\{ p\leq X\ |\ \text{$p$ is a good prime for $E$},\ \alpha\leq \theta_{p}(E)\leq\beta\right\}}{\#\left\{ p\ |\ p\leq X \right\}}=\frac{2}{\pi}\int_{\alpha}^{\beta}\sin^{2}{\theta}d\theta.
\end{align}
\nonumber
\item \label{item:Sato--Tate CM} When $E$ has CM, then for $0\leq \alpha<\beta\leq\pi$, it holds
\begin{align}
  \lim_{X\to\infty}\frac{\#\left\{ p\leq X\ |\ \text{$p$ is a good prime for $E$},\ \alpha\leq \theta_{p}(E)\leq\beta\right\}}{\#\left\{ p\ |\ p\leq X \right\}}=\frac{\delta_{0}}{2}+\frac{1}{2\pi}\int_{\alpha}^{\beta}d\theta.
\end{align}
Here, $\delta_{0}=\begin{cases}
    1,\ \mathrm{if}\ \pi/2\in[\alpha,\beta],\\ 0,\ \mathrm{otherwise}.
\end{cases}$
\end{enumerate}
\end{thm}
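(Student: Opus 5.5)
The plan is to deduce both parts of \Cref{Thm: Sato--Tate conj} from Weyl's equidistribution criterion, applied to the angles $\theta_p(E)$ over good primes. It turns the density statement into analytic statements about $L$-functions on the line $\mathrm{Re}(s)=1$. We do not reprove the deep inputs. We only indicate how they combine, in the spirit of Serre's formulation in \cite{Serre}. Throughout, bad primes are finite in number and do not affect natural densities, so we may ignore them.

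\emph{Non-CM case.} Take $U_n(\cos\theta)=\sin((n+1)\theta)/\sin\theta$. The functions $U_n(\cos\theta)$ form an orthonormal basis of $L^2([0,\pi],\tfrac{2}{\pi}\sin^2\theta\,d\theta)$ and span a dense subspace of $C([0,\pi])$. By Weyl's criterion it therefore suffices to show, for every $n\geq1$,
\begin{align}
\nonumber
\sum_{p\leq X,\ p\ \text{good}} U_n(\cos\theta_p(E)) = o(\pi(X)).
\end{align}
Now $U_n(\cos\theta_p)$ is the trace of $\mathrm{Sym}^n$ of the normalized Frobenius. So the sum above is the prime-sum attached to $L(\mathrm{Sym}^n E,s)$, normalized to have its critical line at $\mathrm{Re}(s)=1/2$. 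A standard Tauberian argument (Wiener--Ikehara, as in the proof of the prime number theorem) reduces the required $o(\pi(X))$ bound to two facts. First, $L(\mathrm{Sym}^n E,s)$ extends meromorphically past $\mathrm{Re}(s)=1$. Second, it is holomorphic and non-vanishing on $\mathrm{Re}(s)=1$.

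The main obstacle is establishing these analytic properties for all $n$. This is exactly the potential automorphy theorem of \cite{BGHT}. For each $n$ there is a totally real Galois extension $F/\QQ$ such that $\mathrm{Sym}^n\rho_{E,\ell}|_{G_F}$ is automorphic, coming from a cuspidal representation of $\GL_{n+1}(\mathbb{A}_F)$. One then applies Brauer's induction theorem to write $L(\mathrm{Sym}^n E,s)$ as a product of integer powers of automorphic $L$-functions over solvable subextensions. Jacquet--Shalika non-vanishing on $\mathrm{Re}(s)=1$ then gives meromorphic continuation, holomorphy and non-vanishing on that line. The non-CM hypothesis enters because it makes $\mathrm{Sym}^n\rho_{E,\ell}$ irreducible, so these $L$-functions have no pole at $s=1$ for $n\geq1$.

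\emph{CM case.} Let $E$ have CM by an order in an imaginary quadratic field $K$. The inert primes $p$ in $K$ are exactly the supersingular primes, by Deuring's reduction theorem. For these, $a_p(E)=0$ for $p>3$, so $\theta_p=\pi/2$. They have density $1/2$ by Chebotarev, which yields the atom $\delta_0/2$. For split primes $p=\fp\bar\fp$ we have $a_p=\psi(\fp)+\overline{\psi(\fp)}$, where $\psi$ is the Hecke Grössencharacter of $E$, so $\theta_p$ is the argument of $\psi(\fp)/\sqrt p$. Hecke's theorem states that $L(\psi^m,s)$ is entire and non-vanishing on $\mathrm{Re}(s)=1$ for $m\neq0$. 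Weyl's criterion then gives equidistribution of these arguments in $[0,2\pi)$ with respect to the uniform measure, over split primes of $K$. Folding $\theta\mapsto\theta$ or $2\pi-\theta$ onto $[0,\pi]$, and recalling that split primes have density $1/2$, gives the uniform contribution $\tfrac{1}{2\pi}\int_\alpha^\beta d\theta$, as in \cite{Murty}.
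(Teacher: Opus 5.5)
Your sketch is correct and takes essentially the same route as the paper, which proves nothing itself and simply quotes \cite{BGHT} for the non-CM case and Hecke's theorem (via \cite{Murty}) for the CM case. Your outline is the standard derivation of those cited results: Weyl's criterion with $U_n(\cos\theta)$, reduced to holomorphy and non-vanishing of $L(\mathrm{Sym}^n E,s)$ on $\mathrm{Re}(s)=1$ via potential automorphy, Brauer induction and Jacquet--Shalika; and, for CM, Deuring's inert/supersingular dichotomy (giving the atom at $\pi/2$) plus Hecke's results for $L(\psi^m,s)$ with folding onto $[0,\pi]$. One bibliographic remark: the natural source for the $L$-function formulation of equidistribution is Serre's book \emph{Abelian $\ell$-adic representations and elliptic curves} (Appendix to Chapter I), rather than \cite{Serre}, which is the open-image paper.
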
 

Before starting the proof of \Cref{Main 1}, we state the following lemma.
\begin{lem}\label{lem: existence of primes}
    Let $E$ be an elliptic curve over $\QQ$ with complex multiplication by an imaginary quadratic field $K$. For any number field $L$, define the set of primes
    \begin{align}
    \nonumber
        S=\{ p\colon \text{good prime for}\ E\ |\ p\ \text{splits completely in}\ L\}.
    \end{align}
    Then, there exist infinitely many primes $p$ in $S$ such that $a_{p}(E)\not\equiv0 \pmod p$. 
\end{lem}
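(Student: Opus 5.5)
Take $E$ with CM by $K$ and a number field $L$, and let $M$ be the Galois closure of $LK$ over $\QQ$. A prime that splits completely in $M$ splits completely in $L$ and also splits in $K$. By the Chebotarev density theorem, the set $S'$ of good primes for $E$ that split completely in $M$ has density $1/[M:\QQ]>0$, and $S'\subset S$. So it suffices to find infinitely many $p\in S'$ with $a_p(E)\not\equiv 0\pmod p$.

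The key input is Deuring's reduction theorem. For a good prime $p$ of the CM curve $E$, the reduction $\tilde E$ is supersingular if and only if $p$ is inert or ramified in $K$. Equivalently, $\tilde E$ is ordinary exactly when $p$ splits in $K$. Every $p\in S'$ splits in $K$, so $\tilde E/\F_p$ is ordinary. For an elliptic curve over $\F_p$, being ordinary means $p\nmid a_p(E)$: supersingular is equivalent to $a_p\equiv 0\pmod p$. Hence every $p\in S'$ satisfies $a_p(E)\not\equiv0\pmod p$. Since $S'$ has positive density, it is infinite, which proves the lemma.

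An alternative route, closer to the paper's mention of Sato--Tate, is also available. By Hasse's bound, for $p\ge 5$ we have $|a_p|\le 2\sqrt p<p$, so $a_p\equiv 0\pmod p$ forces $a_p=0$, i.e.\ $\theta_p=\pi/2$. One would then show that the primes with $a_p=0$ have density exactly $1/2$, those inert in $K$, and compare with the density of $S'$. This requires a joint equidistribution statement rather than \Cref{Thm: Sato--Tate conj} alone, so I would instead use Deuring's criterion directly, with Hasse's bound as a cross-check.

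The main obstacle is stating Deuring's criterion correctly at primes of bad reduction and at ramified primes. Both are finitely many, so they can be discarded, since the density argument is unaffected. I would also make sure the Chebotarev step uses the compositum containing $K$, so that complete splitting in it forces splitting in $K$.
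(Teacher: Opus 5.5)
Your proof is correct and follows the paper's argument: Chebotarev applied to the compositum of $K$ and $L$ (you take its Galois closure $M$) gives a positive-density set of good primes that split completely in $L$ and split in $K$, and Deuring's criterion (ordinary if and only if $p$ splits in $K$, and ordinary if and only if $p\nmid a_p(E)$) then gives $a_p(E)\not\equiv 0\pmod p$ for all of them. Your bookkeeping is actually cleaner than the paper's: the paper asserts $T\subseteq S$, which does not hold in general, whereas what is needed is the inclusion you use, namely that primes split completely in the compositum lie both in $S$ and in the set of primes split in $K$.
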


\begin{proof}
By Deuring's reduction theorem (cf. {\cite[Theorem 1.1]{Zaytsev}}), a rational prime $p$ splits completely in $K$ if and only if $a_{p}(E)\not\equiv0 \pmod p$. We define two sets
    \begin{align}
    \nonumber
        & T=\{ p\colon \text{good prime for}\ E\ |\ p\ \text{splits completely in}\ K\},\\
    \nonumber
        & T^{\prime}=\{ p\colon \text{good prime for}\ E\ |\ p\ \text{splits completely in}\ KL\}.
    \end{align}
    By definition, we see that $T^{\prime}\subseteq T$ and $T\subseteq S$. Applying the Chebotarev density theorem (cf. {\cite[Th\'eor\`eme 1, Section 2.1]{Serre Chebotarev}}), we see that $\text{den}(T)\geq \text{den}(T^{\prime})>0$. Thus,we obtain the conclusion.     
\end{proof}

\begin{proof}[Proof of \Cref{Main 1}]
We assume $\bm{a}(E)=(a_{p}(E)\bmod p)_{p}\in\PP_{\A}^{0}$ (and consequently, deduce a contradiction). By using the formulation in \Cref{Thm: Rosen}-(\ref{item:algebraic3}), there exists a finite Galois extension $L/\QQ$ of degree $d=[L:\QQ]$, a map $\phi\colon \Gal(L/\QQ)\to L$, and a finite number of elements $b_1,\ldots,b_k\in L$ such that 
\begin{align}
\nonumber
\text{Im}(\phi)=\{b_{1},\ldots,b_{k}\}\ \text{and}\ a_{p}(E)\equiv \phi(\Frob_{\fp})=b_{i}\pmod \fp,
\end{align}
for some $i$ with $1\le i\le k$. 
By discarding $b_i$ if necessary, we may assume that $L$ is the Galois closure of $\QQ(b_1,\ldots,b_k)$. We set
\begin{align}
\nonumber
    S_{1}=\left\{ p\colon \text{good prime for}\ E\ |\ p\ \text{splits completely in}\ L\right\}.
\end{align}
Then the natural density is given as $\text{den}(S_{1})=d^{-1}>0$ by Chebotarev density theorem. Thus, there are infinitely many primes that belong to $S_{1}$.

We fix such $p$ and a prime ideal $\fp$ of $\OO_L$ lying over $p$. The condition $p\in S_{1}$ says $\Frob_{\fp}=\text{id}\in\Gal(L/\QQ)$. Moreover, we have $b:=\phi(\text{id})\in\QQ$ by definition of $g$. We can assume that $b\neq0$. Indeed, if $E$ is non-CM, then the natural density of the set of rational primes $p$ such that $a_{p}(E)=0$ is $0$ (cf. {\cite[Theorem B]{Elkies}}). In addition, the CM case follows from \Cref{lem: existence of primes}. In either case, there exist infinitely many primes $p\in S_1$ for which $a_{p}(E)\neq0$. Since $a_{p}(E)\in\ZZ$ and $b\in\frac{1}{N}\ZZ$ for some $N\in\ZZ_{\ge0}$, we have
\begin{align}\label{cong a_p=b}
    a_{p}(E)\equiv b\bmod p.
\end{align}
Using \eqref{cong a_p=b} and Hasse bound (cf. \cite[Chapter V, p.138, Theorem 1.1]{Sil}), we obtain
\begin{align}\label{eneq p}
    \frac{p}{N}\leq |a_{p}(E)-b| \leq 2\sqrt{p}+|b|.
\end{align}
\eqref{eneq p} says that $p$ lives in $p<K:=(N+\sqrt{N^{2}+N|b|})^{2}$. Note that $K$ is independent of the choice of $p$, so we have
\begin{align}\label{a_p=b}
    a_{p}(E)-b=0,\ {}^{\forall}p\in S_{1}\ \text{and}\ p>K.
\end{align}
We define the following sets by
\begin{align}
    \nonumber
    & S_{2}=\{ p\colon \text{good prime for}\ E\ |\ p\ \text{splits completely in}\ L,\ b\in\ZZ_{p},\ a_{p}(E)=b\ \text{for}\ p>K \},\\
\nonumber
  & S_{3}=\{ p\colon \text{good prime for}\ E\ |\ b\in\ZZ_{p},\ a_{p}(E)=b\ \text{for}\ p>K \}.
\end{align}
Then, we have $S_{2}\subseteq S_{3}$. 

\noindent
\underline{(i) $E$ is non-CM case} The equality
\begin{align}
    \nonumber
     \text{den}(S_{3})=\int_{\frac{\pi}{2}}^{\frac{\pi}{2}}\sin^{2}\theta \mathrm{d}\theta=0
\end{align}
holds by Sato--Tate conjecture (\Cref{Thm: Sato--Tate conj}-(\ref{item:Sato--Tate nonCM})). \\

\noindent
\underline{(ii) $E$ is CM case} By the condition $b\neq0$, we have the same equation
\begin{align}
\nonumber
    \text{den}(S_{3})=0
\end{align}
from \Cref{Thm: Sato--Tate conj}-(\ref{item:Sato--Tate CM}).

However, in both cases, it contradicts 
\begin{align}
\nonumber
     S_{2}\subseteq S_{3}\ \text{and}\ \text{den}(S_{2})=\text{den}(S_{1})>0.
\end{align}
Thus, we obtain the conclusion.
\end{proof}

\section{Transcendence for the Frobenius traces of abelian varieties}\label{section: fin. trans. abelian var. case}

In this section, we prove \Cref{Main 2} using the proof method in \cite{LZ1}. In what follows, we introduce the notation. Specifically, in \Cref{Main 2}, we consider the case $K=\QQ$. 

Let $A$ be an abelian variety of dimension $g$ over a number field $K$. Let $v$ be a fixed finite place on $K$. We denote by $K_v$ the completion of $K$ at $v$, and by $\OO_{K,v}$ its ring of integers. We say that $A$ has \textit{good reduction at $v$} if there exists an abelian scheme $A_{sch}$ over $\Spec(\OO_{K,v})$ such that its generic fiber $A_{sch} \times_{\Spec(\mathcal{O}_{K,v})} \Spec(K_v)$ is isomorphic to $A \times_{\Spec(K)} \Spec(K_v)$. If $A$ has good reduction at $v$, we call $v$ a \textit{good place} for $A$. For further details, we refer to \cite{ST} (cf. \Cref{remark: good reduction Galois representation}).

Let $\ell$ be a rational prime. For a positive integer $m$, there is a natural action of $G_K$ on the $\ell^m$-torsion points $A[\ell^m]$. Taking the inverse limit, we obtain the \textit{$\ell$-adic Galois representation}
\begin{align}
\nonumber
\rho_{A,\ell}\colon G_K\to\Aut_{\ZZ_\ell}(T_\ell A)\simeq\GL_{2g}(\ZZ_\ell),
\end{align}
where $T_\ell A$ is the $\ell$-adic Tate module of $A$. For any fixed polarization $\lambda\colon A\to A^{\vee}$, we obtain the Weil pairing on $T_\ell A$:
\begin{align}
\nonumber
e_{\ell,\lambda}\colon T_\ell A\times T_\ell A\to \ZZ_\ell(1),
\end{align}
where $\ZZ_\ell(1)$ is the Tate twist. For any prime $\ell$ such that $\ell\nmid\deg\lambda$, the pairing $e_{\ell,\lambda}$ is perfect and non-degenerate. It is known that $e_{\ell,\lambda}$ is Galois equivariant; that is, for any $\sigma\in G_K$ and any $P,Q\in T_\ell A$, we have
\begin{align}
\nonumber
e_{\ell,\lambda}({}^{\sigma}P,{}^{\sigma}Q)={}^{\sigma}(e_{\ell,\lambda}(P,Q)) = \chi_\ell(\sigma) e_{\ell,\lambda}(P,Q),
\end{align}
where $\chi_\ell \colon G_K \to \ZZ_\ell^\times$ is the $\ell$-adic cyclotomic character.
By fixing a symplectic basis for $T_\ell A$, the Galois equivariance implies that the image of $\rho_{A,\ell}$ is contained in the \textit{general symplectic group} $\GSp_{2g}(\ZZ_\ell)$, defined by
\begin{align}
\nonumber
\GSp_{2g}(\ZZ_\ell) = \left\{ X\in M_{2g}(\ZZ_\ell)\ \Big|\ {}^{\exists} \nu(X)\in\ZZ_\ell^{\times} \text{ s.t. } {}^{t}XJ_gX=\nu(X)J_g \right\}.
\end{align}
 It is known that, as an algebraic group, $\GSp_{2g}$ has dimension $2g^2+g+1$. The composition of $\rho_{A,\ell}$ with the similitude character $\nu \colon \GSp_{2g}(\ZZ_\ell)\to\ZZ_\ell^\times$ coincides with $\chi_\ell$. Taking the $\bmod\ \ell$ reduction, we obtain the residual representation:
\begin{align}
\nonumber
\overline{\rho}_{A,\ell}\colon G_K\to\GSp_{2g}(\F_\ell).
\end{align}
Similarly, for any positive integer $m>1$, we write the $\bmod\ \ell^m$ representation of $\rho_{A,\ell}$ by
\begin{align}
    \nonumber
    \overline{\rho}_{A,\ell^m}\colon G_K\to \GSp_{2g}(\ZZ/\ell^m\ZZ).
\end{align}

Note that the conditions in \Cref{Main 2} are imposed in order to apply the next theorem by Serre (\cite{Serre2}) as follows:

\begin{thm}[{\cite[TH\'EOR\`EME 3]{Serre2}}]\label{Thm: Serre open image}
     Let $A$ be an abelian variety over a number field $K$, such that
    \begin{enumerate}
        \item $\End_{\Kbar}(A)=\ZZ$,
        \item $\dim A =:g$ satisfies $g\equiv 1\pmod2$ or $g=2$ or $g=6$.
    \end{enumerate}
    Then, the $\ell$-adic Galois representation $\rho_{A,\ell}\colon G_K\to\GSp_{2g}(\ZZ_\ell)$ is surjective for all but finitely many $\ell$ not dividing the discriminant of the polarization of $A$.
\end{thm}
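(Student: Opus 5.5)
Since \Cref{Thm: Serre open image} is quoted from \cite{Serre2}, we only outline how one would establish it; the rest of the paper uses it as a black box. The plan has two stages. First we determine the $\ell$-adic Lie algebra of the image for a single $\ell$. Then we bootstrap to surjectivity of $\rho_{A,\ell}$ for almost all $\ell$ by studying the residual representations $\overline{\rho}_{A,\ell}$.

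\textbf{Step 1 (image is open in $\GSp_{2g}$ for each $\ell$).} Let $G_\ell$ be the Zariski closure of $\rho_{A,\ell}(G_K)$ in $\GSp_{2g,\QQ_\ell}$, and let $\mathfrak{g}_\ell$ be its Lie algebra. By Faltings' theorems (Tate conjecture and semisimplicity of $V_\ell A$), $V_\ell A$ is a semisimple $G_\ell$-module with $\End_{G_\ell}(V_\ell A)=\End_K(A)\otimes\QQ_\ell$. The hypothesis $\End_{\Kbar}(A)=\ZZ$ makes this $\QQ_\ell$ even after any finite base change, so $G_\ell^{0}$ is reductive and $V_\ell A$ is absolutely irreducible for it. Bogomolov's theorem shows that $G_\ell$ contains the homotheties. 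Sen's theory, or Raynaud's description of the action of tame inertia, gives a Hodge--Tate cocharacter with weights $0,1$, each of multiplicity $g$.

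Hence the derived subalgebra $[\mathfrak{g}_\ell,\mathfrak{g}_\ell]\otimes\overline{\QQ}_\ell$ acts irreducibly and symplectically on a space of dimension $2g$, and this representation is generated by a minuscule coweight. One then runs through the classification of irreducible minuscule representations, noting that a tensor product of such factors has dimension a product. When $g$ is odd, or $g=2$ or $g=6$, the only possibility of dimension $2g$ that is symplectic is the standard representation of $\mathfrak{sp}_{2g}$. This forces $G_\ell=\GSp_{2g}$, and therefore $\rho_{A,\ell}(G_K)$ is open in $\GSp_{2g}(\ZZ_\ell)$.

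\textbf{Step 2 (full image for large $\ell$).} We pass to the mod $\ell$ images $H_\ell=\overline{\rho}_{A,\ell}(G_K)\subset\GSp_{2g}(\F_\ell)$. Several inputs combine here.
\begin{enumerate}
\item By Raynaud, for $\ell$ large and unramified in $K$, inertia at a place above $\ell$ acts through fundamental characters with exponents $0,1$. This produces in $H_\ell$ a torus element of the same Hodge--Tate type.
\item By Faltings again, with constants uniform in $\ell$, $\overline{\rho}_{A,\ell}$ is absolutely irreducible and $\End_{H_\ell}(A[\ell])=\F_\ell$ for $\ell\gg0$.
\item Serre's theory of Frobenius tori and $\ell$-independence of characteristic polynomials lets one compare the Zariski-closure-type envelope of $H_\ell$ with $G_\ell$. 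Alternatively, one can apply the Larsen--Pink / Nori classification of subgroups of $\GSp_{2g}(\F_\ell)$ generated by order-$\ell$ elements.
\end{enumerate}
Together these show that, for $\ell\gg0$, $H_\ell$ contains $\mathrm{Sp}_{2g}(\F_\ell)$. The main point is the same minuscule-weight classification as in Step 1, now applied to the associated semisimple group over $\F_\ell$. Since the multiplier of $\rho_{A,\ell}$ is $\chi_\ell$, which is surjective onto $\F_\ell^\times$ for $\ell$ unramified in $K$, we get $H_\ell=\GSp_{2g}(\F_\ell)$.

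\textbf{Step 3 (lifting) and main obstacle.} For $\ell\ge5$, a closed subgroup of $\mathrm{Sp}_{2g}(\ZZ_\ell)$ that surjects onto $\mathrm{Sp}_{2g}(\F_\ell)$ is all of $\mathrm{Sp}_{2g}(\ZZ_\ell)$. This is Serre's lifting lemma: lift a transvection, then use that its $\ell$-th power generates the first congruence layer. Combining this with surjectivity of $\chi_\ell$ onto $\ZZ_\ell^\times$ gives surjectivity of $\rho_{A,\ell}$ for all but finitely many $\ell$, excluding those dividing the polarization degree so that $e_{\ell,\lambda}$ is perfect. The main obstacle is Step 2: obtaining the containment $\mathrm{Sp}_{2g}(\F_\ell)\subseteq H_\ell$ uniformly in $\ell$. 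This needs Faltings' effective finiteness results together with a careful group-theoretic classification in characteristic $\ell$, and it is exactly where the dimension restriction on $g$ enters a second time.
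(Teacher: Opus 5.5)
The paper gives no proof of this statement. It is quoted from Serre's r\'esum\'e and used as a black box for \Cref{Main 2}, so your outline can only be measured against Serre's own strategy. You reproduce its three-stage shape (open image, mod $\ell$ image, lifting), and your Step 3 is fine.

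The decisive inference in Step 1, however, does not follow from the inputs you list. Faltings, Bogomolov and Sen give you a connected reductive $G_\ell^0$ that contains the homotheties, acts absolutely irreducibly and symplectically, and contains \emph{one} cocharacter with weights $0,1$ of multiplicity $g$. Write $V_\ell A\otimes\overline{\QQ}_\ell=\bigotimes_i V_i$ over the simple factors. A cocharacter with only two weights is non-central in at most one factor, so it says nothing about the others. For instance, $\mathbb{G}_m\cdot(\mathrm{SL}_2\times\mathrm{SO}_3)$ acting on $\mathbf{2}\otimes\mathbf{3}$ (so $g=3$) has every property you list. Yet $\mathbf{3}$ is not minuscule and the group is not $\GSp_6$. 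So your sentence ``hence this representation is generated by a minuscule coweight'' is unjustified. The missing idea is Serre's theory of Frobenius tori, made precise by Pink. Frobenius tori are maximal tori of $G_\ell^0$ for a density-one set of places, and control of their slopes via Shimura--Taniyama shows that $G_\ell^0$ is generated by cocharacters with weights in $\{0,1\}$. Only this forces every simple factor to act through a minuscule representation.

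Even granting that, your classification claim fails for $g=6$. It is right for $g$ odd: every nontrivial self-dual minuscule representation of a simple Lie algebra has even dimension, and $2g\equiv 2\pmod 4$ then leaves a single factor. It is also right for $g=2$. But consider $\mathfrak{sl}_2\oplus\mathfrak{so}_6$ acting on $\mathbf{2}\otimes\mathbf{6}$:
\begin{itemize}
\item it is irreducible and symplectic (symplectic tensor orthogonal);
\item both factors act through minuscule representations;
\item each factor carries a cocharacter with weights $0,1$, each of multiplicity $6$ on the $12$-dimensional space.
\end{itemize}
Ruling this case out needs a further argument, for example that the simple factors over $\overline{\QQ}_\ell$ are all isomorphic with isomorphic representations. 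Nothing in your sketch supplies it.

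Lastly, Step 2 is exactly what Th\'eor\`eme 3 adds beyond openness, yet it is only a list of ingredients followed by ``together these show''. Getting from Raynaud's inertia tori, Faltings' mod $\ell$ results and Nori-type envelopes to $\mathrm{Sp}_{2g}(\F_\ell)\subseteq H_\ell$ requires a mod $\ell$ version of the generation statement above, uniform in $\ell$. The same $g=6$ issue recurs there. As written, that step is asserted rather than proved.
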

Before going to prove \Cref{Main 2}, we need the following lemmas. 
\begin{lem}\label{Hm}
Let $d=\dim\GSp_{2g}$. For an integer $m\ge 1$ and a prime number $\ell>2$, put $G_m=\GSp_{2g}(\ZZ/\ell^m \ZZ)$ and $H_m=\{a\in G_m\ \mid\ \tr(a)=0\}$. Then, $\#H_m=O(\ell^{md-1})$ for $\ell$ sufficiently large.
\end{lem}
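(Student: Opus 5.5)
Here the implied constant should not depend on $m$ (and, ideally, not on $\ell$), since the lemma will be used with $\#G_m\asymp\ell^{md}$ to show that the proportion of trace-zero elements is $O(\ell^{-1})$. The plan is to count the fibres of the reduction map $\pi\colon G_m\to G_1=\GSp_{2g}(\F_\ell)$, whose fibres all have size $\ell^{(m-1)d}$ because $\GSp_{2g}$ is smooth over $\ZZ_\ell$. The proof splits into a mod-$\ell$ count and a lifting (Hensel) count.

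\textbf{Step 1: the case $m=1$.} Consider the trace as a linear form on the space of $2g\times2g$ matrices. Its zero locus $\{\tr=0\}\cap\GSp_{2g}$ is a proper closed subvariety of the irreducible variety $\GSp_{2g}$, which is connected, since the identity has trace $2g\neq0$ when $\ell>2g$. Hence this subvariety has dimension $d-1$ and degree bounded independently of $\ell$. By the Lang--Weil estimate, or more elementarily by a Schwarz--Zippel type bound, we get $\#H_1\le C\ell^{d-1}$ with $C$ depending only on $g$. Alternatively, one can count directly: $\GSp_{2g}(\F_\ell)$ is fibred over $\nu\in\F_\ell^\times$ into $\mathrm{Sp}_{2g}$-cosets, and the trace is a nonconstant polynomial on each.

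\textbf{Step 2: lifting from level $m-1$ to level $m$.} Write elements of the fibre of $G_m\to G_{m-1}$ over $\bar a$ as $a(1+\ell^{m-1}X)$, where $X$ runs over the Lie algebra $\mathfrak{gsp}_{2g}(\F_\ell)$, a space of dimension $d$. Then
\begin{align}
\nonumber
\tr\bigl(a(1+\ell^{m-1}X)\bigr)=\tr(a)+\ell^{m-1}\tr(aX).
\end{align}
For $\tr\equiv0\pmod{\ell^m}$ we need $\tr(\bar a)\equiv 0\pmod{\ell^{m-1}}$, and then a single affine-linear condition on $X$ over $\F_\ell$. If the linear form $X\mapsto\tr(\bar a_1X)$ (with $\bar a_1$ the reduction mod $\ell$) is nonzero on $\mathfrak{gsp}_{2g}$, exactly $\ell^{d-1}$ of the $\ell^d$ lifts survive. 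The form is nonzero exactly when $\bar a_1$ is not orthogonal to $\mathfrak{gsp}_{2g}$ under the trace pairing. Since the trace pairing restricted to $\mathfrak{gsp}_{2g}$ is nondegenerate for $\ell>2$ (it is reductive with invertible Killing-type form), and $\bar a_1$ is invertible, the only bad case is $\bar a_1\perp\mathfrak{gsp}$. Because $\mathfrak{gsp}$ contains the scalars, this forces $\tr(\bar a_1)=0$, and more. A cleaner sufficient observation: the scalar direction $X=I$ gives $\tr(\bar a_1X)=\tr(\bar a_1)$, and the directions $X\in\mathfrak{sp}$ give $\tr(\bar a_1X)$. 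Since the symmetrization of $\bar a_1$ with respect to the symplectic form is nonzero for invertible $\bar a_1$ (as $\mathfrak{gl}=\mathfrak{sp}\oplus\{\text{$J$-symmetric}\}$ and $\mathfrak{sp}^\perp$ is the $J$-symmetric part), I expect to show the form is never zero. I would verify this by writing $\bar a_1=s+t$ with $s\in\mathfrak{gsp}$ and $t\in\mathfrak{gsp}^\perp$, and checking that $\mathfrak{gsp}^\perp$ contains no invertible matrix. This is the main obstacle; if it fails, the bad $\bar a_1$ form a subvariety of dimension at most $d-1$, and I would bound their contribution separately by Step 1's argument.

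\textbf{Step 3: induction.} Granting Step 2, we get $\#H_m=\ell^{d-1}\#\{\bar a\in G_{m-1}:\tr\bar a\equiv0\bmod\ell^{m-1}\}=\ell^{d-1}\#H_{m-1}$. Iterating gives
\begin{align}
\nonumber
\#H_m=\ell^{(m-1)(d-1)}\#H_1\le C\ell^{m(d-1)}\le C\ell^{md-1}.
\end{align}
If instead Step 2 only gives the crude bound of at most $\ell^d$ lifts for bad points, I would still use $\#H_m\le\#\pi^{-1}(H_1)=\ell^{(m-1)d}\#H_1\le C\ell^{md-1}$. This trivial bound alone already proves the lemma, so the real content is Step 1, and Step 2 is needed only for the sharper count.
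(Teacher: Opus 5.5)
Your fallback route is correct and is essentially the paper's proof: the paper also reduces to $\#H_m\le\ell^{(m-1)d}\#H_1$, getting it by inducting one level at a time (covering $H_m$ by at most $\#H_{m-1}$ cosets of $K_m\simeq\mathfrak{gsp}_{2g}(\F_\ell)$) rather than pulling back along $G_m\to G_1$ in one step, and then bounds $\#H_1=O(\ell^{d-1})$ by Lang--Weil. Drop the optional sharper Step 2, because its key expectation fails whenever $g$ is even: an element $a\in\GSp_{2g}(\F_\ell)$ is trace-orthogonal to $\mathfrak{gsp}_{2g}$ iff $a^2=\nu(a)I$ and $\tr a=0$, which happens for instance for $a=\bigl(\begin{smallmatrix}0&-j\\ j&0\end{smallmatrix}\bigr)\in\mathrm{Sp}_4(\F_\ell)$ with $j=\bigl(\begin{smallmatrix}0&1\\ -1&0\end{smallmatrix}\bigr)$, and at such points either all $\ell^d$ lifts survive or none do.
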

\begin{proof}
Let $\pi_m:G_m\longrightarrow G_{m-1},\ M\mapsto M \mod \ell^{m-1}$. Put $K_m:={\rm Ker}(\pi_m)$. It is easy to see that $K_m$ 
is abelian and it is isomorphic to 
\begin{align}
\nonumber
\frak g=\mathfrak{gsp}_{2g}(\F_\ell)=\{X\in M_{2g}(\F_\ell)\ |\ {}^\exists c\in \F_\ell\ \text{such that }
{}^t X J_g+J_g X=cJ_g\}
\end{align}
by sending $I_{2g}+\ell^{m-1}Y$ to $X=\ell^{m-1}Y$ together with the identification  
$\ell^{m-1}M_{2g}(\ZZ/\ell^{m}\ZZ)\simeq M_{2g}(\F_\ell)$. Here, $J_g=\begin{pmatrix}
        0_g & I_g \\ -I_g & 0_g
    \end{pmatrix}\in M_{2g}(\ZZ/\ell^m\ZZ)$, $I_g$ and $0_g$ denote the identity matrix and the zero-matrix. Thus, $\# K_m=\ell^d$ holds. Let 
\begin{align}
\nonumber
H_mK_m=\{ab\mid a\in H_m,\ b\in K_m\}.
\end{align}
Since $K_m$ is a subgroup of $G_m$, the set $H_mK_m$ is stable under 
right multiplication by $K_m$. Moreover, every right $K_m$-coset contained 
in $H_mK_m$ has a representative in $H_m$. Indeed, if $x\in H_mK_m$, then 
$x=ab$ for some $a\in H_m$ and $b\in K_m$, and hence
\begin{align}
\nonumber
xK_m=abK_m=aK_m.
\end{align}
Thus, we may choose a complete system $I\subset H_m$ of representatives for 
the quotient set $H_mK_m/K_m$. We then have the disjoint union
\begin{align}
\nonumber
H_mK_m=\coprod_{a\in I}aK_m.
\end{align}
Since $H_m\subset H_mK_m$, intersecting both sides with $H_m$ gives
\begin{align}
\nonumber
H_m
=
\coprod_{a\in I}(aK_m\cap H_m).
\end{align}

For the set $aK_m\cap H_m$, 
pick an element $a(I_g+\ell^{m-1}Y)$ there. Since it also belongs to 
$H_m$, we have $0={\rm tr}(a(I_g+\ell^{m-1}Y))={\rm tr}(a)+
\ell^{m-1}{\rm tr}(aY)=\ell^{m-1}{\rm tr}(aY)$. Let $\overline{a}:=a\bmod \ell$. Then, the condition is equivalent to 
${\rm tr}(\overline{a}X)=0$ for $X\in M_{2g}(\F_\ell)$ corresponding to $\ell^{m-1}Y$ under $M_{2g}(\F_\ell)\simeq \ell^{m-1}M_{2g}(\ZZ/\ell^m\ZZ)$. Consider an $\F_\ell$-linear map $K_m\simeq \frak g 
\xrightarrow{X\mapsto  \tr(\bar{a}X)} \F_\ell$. Its kernel  has the cardinality $\ell^{d-1}$ if the trace map is non-trivial 
and $\ell^d$ otherwise. In either case, it yields 
$\#(aK_m\cap H_m )\le \ell^d$.  
Thus, we have $\# H_m\le \# I\cdot\ell^d$ and also 
$$\# H_m/\ell^d\le \# I=\# (H_mK_m/K_m) \le \# H_{m-1}$$
since $\pi_m$ is a group homomorphism and $\pi_m|_{H_mK_m/K_m}:H_m K_m/K_m\longrightarrow H_{m-1}$ is injective. 
Thus, inductively, we have 
$$\# H_m\le \ell^{(m-1)d}\# H_1.$$
Since $H_1$ is a closed algebraic variety over $\F_\ell$, we can apply the Lang--Weil theorem (cf. \cite[p.2729, THEOREM 2.3.2]{AA}), to conclude that $\# H_1=O(\ell^{d-1})$. Thus, 
$$\# H_m\le \ell^{(m-1)d}O(\ell^{d-1})=O(\ell^{md-1})$$
and the claim follows.
\end{proof}

\begin{lem}\label{lem: density of S}
    Let $A$ be an abelian variety over $\QQ$ of dimension $g$ satisfying the conditions in \Cref{Thm: Serre open image}. Put
    \begin{align}
    \nonumber
        & S=\{ p\ \colon \text{good prime for}\ A\ |\ a_p(A)=0 \}.
    \end{align}
    Then, we have $\mathrm{den}(S)=0$.
\end{lem}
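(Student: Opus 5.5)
Fix a prime $\ell>2$ that is large enough for two things at once: $\rho_{A,\ell}$ is surjective onto $\GSp_{2g}(\ZZ_\ell)$ by \Cref{Thm: Serre open image}, and the bound of \Cref{Hm} holds with its implied constant. For $m\ge1$, let $L_m$ be the fixed field of $\Ker(\overline{\rho}_{A,\ell^m})$. Surjectivity of $\rho_{A,\ell}$ gives
\begin{align}
\nonumber
\Gal(L_m/\QQ)\simeq G_m=\GSp_{2g}(\ZZ/\ell^m\ZZ).
\end{align}
Now take a good prime $p\neq\ell$ for $A$. Then $L_m/\QQ$ is unramified at $p$ by the N\'eron--Ogg--Shafarevich criterion. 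Moreover, $\tr(\overline{\rho}_{A,\ell^m}(\Frob_p))\equiv a_p(A)\pmod{\ell^m}$, since the characteristic polynomial of Frobenius has integer coefficients independent of $\ell$ (\Cref{remark: good reduction Galois representation}). Consequently,
\begin{align}
\nonumber
S\setminus\{\ell\}\subseteq S'_m:=\{p \text{ unramified in } L_m\ |\ \overline{\rho}_{A,\ell^m}(\Frob_p)\in H_m\}.
\end{align}
The set $H_m$ is stable under conjugation, because trace is a class function. So the Chebotarev density theorem applies and gives
\begin{align}
\nonumber
\mathrm{den}(S'_m)=\frac{\#H_m}{\#G_m}.
\end{align}

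Next I estimate the right-hand side. By \Cref{Hm}, $\#H_m\le C\,\ell^{md-1}$ with $C$ independent of $m$; this uniformity comes from the inductive bound $\#H_m\le\ell^{(m-1)d}\#H_1$ established in that proof. For the denominator, the reduction maps $G_m\to G_{m-1}$ are surjective, since $\GSp_{2g}$ is smooth over $\ZZ_\ell$, and each kernel $K_m$ has $\ell^d$ elements. Hence $\#G_m=\ell^{(m-1)d}\#G_1$. A standard count gives $\#G_1=(\ell-1)\,\ell^{g^2}\prod_{i=1}^g(\ell^{2i}-1)\gg\ell^d$. Putting these together,
\begin{align}
\nonumber
\frac{\#H_m}{\#G_m}\le\frac{\#H_1}{\#G_1}=O(\ell^{-1}),
\end{align}
with the implied constant uniform in $\ell$ by Lang--Weil.

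This yields
\begin{align}
\nonumber
\limsup_{X\to\infty}\frac{\#\{p\le X\ |\ p\in S\}}{\#\{p\le X\}}\le\frac{\#H_1}{\#G_1}\ll\frac1\ell.
\end{align}
This holds for every sufficiently large prime $\ell$, and Serre's theorem excludes only finitely many $\ell$. Letting $\ell\to\infty$ gives upper density $0$, so $\mathrm{den}(S)=0$. The parameter $m$ is not actually needed here: $m=1$ with $\ell\to\infty$ already suffices. Taking $m$ large with $\ell$ fixed would not help, because the bound of \Cref{Hm} does not improve with $m$.

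I expect the main obstacle to be the uniformity of the Lang--Weil estimate $\#H_1\le C\ell^{d-1}$ in $\ell$. To handle it, I would argue that $H_1$ is the fiber over $\F_\ell$ of a single scheme $\{\tr=0\}\subset\GSp_{2g}$ defined over $\ZZ$. Its geometric fibers have dimension $d-1$ and a uniformly bounded number of components of bounded degree, since they are cut out by equations of bounded degree. Lang--Weil then gives $\#H_1\le C\ell^{d-1}$ with $C$ independent of $\ell$. A second point to check is that $\tr$ does not vanish identically on $\GSp_{2g}$, which is clear because $\tr(I_{2g})=2g\neq0$ for $\ell>2g$. This guarantees that $\{\tr=0\}$ really is a proper hypersurface of dimension $d-1$.
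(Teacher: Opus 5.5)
Your proposal is correct and follows essentially the same route as the paper. You take an $\ell$ with $\rho_{A,\ell}$ surjective, apply Chebotarev in $\Gal(L_m/\QQ)\simeq\GSp_{2g}(\ZZ/\ell^m\ZZ)$ to the conjugation-stable set $H_m$, bound $\#H_m/\#G_m\ll\ell^{-1}$ via \Cref{Hm}, and let $\ell\to\infty$. Working with the upper density, invoking N\'eron--Ogg--Shafarevich for unramifiedness, and checking that the Lang--Weil constant is uniform in $\ell$ usefully makes explicit points the paper leaves implicit.
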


\begin{proof}
    For a positive integer $m$, we put
    \begin{align}
    \nonumber
        S_m=\{ p\ \colon \text{good prime for}\ A\ |\ a_p(A)\equiv0\pmod{\ell^m} \}.
    \end{align}
    By definition, we see that $S\subset S_m$, and then, $\text{den}(S)\leq\text{den}(S_m)$ holds. Since $\GSp_{2g}$ is smooth over $\ZZ_\ell$, for each $m\ge 1$, the $\bmod\ \ell^m$ reduction $\GSp_{2g}(\ZZ_\ell)\to \GSp_{2g}(\ZZ_\ell/\ell^m \ZZ_\ell)$ is surjective. Thus,  
    the $\bmod\ \ell^{m}$ representation
    \begin{align}
        \nonumber
\overline\rho_{A,\ell^m}\colon G_\QQ\to\GSp_{2g}(\ZZ/\ell^{m}\ZZ)
    \end{align}
    is also surjective for any $m\geq 1$. 
    
    Let $K_{m}$ be the Galois extension which corresponds to $\Ker\overline\rho_{A,\ell^m}$. Then we have
    \begin{align}
        \nonumber
        \GSp_{2g}(\ZZ/\ell^{m}\ZZ)\simeq G_\QQ\Big/\Ker\overline{\rho}_{A,\ell^m}\simeq \Gal(K_{m}/\QQ).
    \end{align}
    Note that the first isomorphism holds due to the surjectivity of $\overline{\rho}_{A,\ell^m}$.
    
      Comparing the density, we have
    \begin{align}
        \nonumber
        \text{den}(S)\leq \text{den}(S_{m})=\frac{\#\{ \sigma\in\Gal(K_{m}/\QQ)\ |\ \tr(\overline{\rho}_{A,\ell^m}(\sigma))=0 \}}{\# \Gal(K_{m}/\QQ)}\leq\frac{\#H_{m}}{\#\GSp_{2g}(\ZZ/\ell^{m}\ZZ)}.
    \end{align}
    For any $\ell$ sufficiently large, the following estimate
    \begin{align}
        \nonumber
        \#\Gal(K_{m}/\QQ)=\#\GSp_{2g}(\ZZ/\ell^{m}\ZZ)\sim \ell^{md},\quad d:=\dim \GSp_{2g}=2g^2+g+1
    \end{align}
    holds by the well-known formula
    \begin{align}
    \nonumber
        \#\GSp_{2g}(\ZZ/\ell^m\ZZ)&=\ell^{(2m-1)g^2+(m-1)(g+1)}(\ell-1)\prod_{i=1}^{g}(\ell^{2i}-1)\\
    \nonumber
        &=\ell^{md}(1-\ell^{-1})\prod_{i=1}^{g}(1-\ell^{-2i}).
    \end{align}
    In addition, by \Cref{Hm}
    , we have
    \begin{align}
        \nonumber
        \# H_{m}=O(\ell^{md-1}).
    \end{align}
    Therefore, the following estimate
    \begin{align}
        \nonumber
        \text{den}(S)\leq\text{den}(S_{m})\ll\frac{1}{\ell},
    \end{align}
    holds for all but finitely many primes $\ell$. Therefore, we obtain the claim.
\end{proof}

\begin{rem}
    Although applying \Cref{Hm} with $m=1$ suffices for the proof of \Cref{lem: density of S}, we choose to state it separately as the statement itself is of interest in its own right.
\end{rem}

Before starting the proof of \Cref{Main 2}, we state two known lemmas.

\begin{lem}\label{lem: prime divisors}
    Let $n$ be a non-zero integer. For each $n$, we define
    \begin{align}
    \nonumber
        \omega(n)=\#\{ q\ |\ q\ \text{is a rational prime and}\ q\mid n \}.
    \end{align}
    Then, we have
    \begin{align}
    \nonumber
        \omega(n)=O(\log|n|).
    \end{align}
    In particular, if $|n|=O(X^c)$ for some constant $c>0$, then
    \begin{align}
    \nonumber
        \omega(n)=O(\log X).
    \end{align}
\end{lem}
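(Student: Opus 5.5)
The plan is to compare $|n|$ with the product of its distinct prime divisors. Write $k=\omega(n)$ and let $q_1<q_2<\cdots<q_k$ be the distinct primes dividing $n$. Each prime divides $n$, so their product divides $n$. Since $n\neq 0$, this gives
\begin{align}
\nonumber
|n|\;\ge\;\prod_{i=1}^{k}q_i\;\ge\;2^{k}.
\end{align}
The second inequality holds because every prime is at least $2$. Taking logarithms yields $\omega(n)\le \log|n|/\log 2$ whenever $|n|\ge 2$.

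The cases $|n|=1$ need separate treatment, since there $\log|n|=0$. In these cases $\omega(n)=0$, so the bound holds trivially. Alternatively one can state the estimate in the harmless form $\omega(n)\le 1+\log|n|/\log 2$. In either form $\omega(n)=O(\log|n|)$, with the absolute constant $1/\log 2$. A sharper bound $\omega(n)\ll \log|n|/\log\log|n|$ is available by comparing with the primorial and using Chebyshev's estimate $\sum_{q\le y}\log q\gg y$. However, only the crude bound is needed here, so I will not pursue the refinement.

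For the second assertion, suppose $|n|\le C X^{c}$ for $X$ sufficiently large. Then $\log|n|\le \log C + c\log X = O(\log X)$ once $X\ge 3$, and substituting into the first bound gives $\omega(n)=O(\log X)$. The implied constant depends only on $c$ and $C$.

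I expect no genuine obstacle. The only points requiring care are the degenerate case $|n|=1$ and making sure the implied constants are uniform in $n$. This uniformity is what the later application to differences such as $a_p(A)-b$, bounded polynomially via the Weil bounds, will require.
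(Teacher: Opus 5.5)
Your proposal is correct and follows the paper's proof: both bound $2^{\omega(n)}\le\prod_j q_j\le|n|$, take logarithms to get $\omega(n)\le\log|n|/\log 2$, and then derive the second claim from $\log|n|=O(\log X)$. Your separate treatment of $|n|=1$ is harmless but not needed, since both sides of the bound are $0$ there.
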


\begin{proof}
    Let $q_1,\ldots,q_{\omega(n)}$ be the distinct prime divisors of $n$. Since each $q_j\ge2$ for any $j\in\{ 1,\ldots,\omega(n) \}$, we obtain
    \begin{align}
    \nonumber        2^{\omega(n)}\leq\prod_{j=1}^{\omega(n)}q_j\leq|n|.
    \end{align}
    Taking the logarithm, we have
    \begin{align}
        \nonumber
        \omega(n)\leq\frac{\log|n|}{\log2}\Rightarrow \omega(n)=O(\log|n|).
    \end{align}
    If $|n|=O(X^c)$ for some $c>0$, we see that $\log|n|=O(\log X)$, and then,
    \begin{align}
    \nonumber
        \omega(n)=O(\log X)
    \end{align}
    holds. 
\end{proof}

\begin{lem}\label{lem: counting lemma} 
Let $L$ be a number field of degree $d$, and let $b_1,\ldots, b_k \in L$. For each rational prime $p$, let $a_p$ be an integer. Suppose that
\begin{align}
    \nonumber
    |a_p|\leq Cp^\theta
\end{align}
for some constants $C>0$ and $0\leq\theta<1$.
Let $S$ be a set of rational primes satisfying the following conditions:
\begin{enumerate}
    \item[(i)] For every $p\in S$, there exist $i\in\{ 1,\ldots,k \}$ and a prime $\fp\mid p$ of $L$ such that $a_p\equiv b_i\pmod\fp$. 
    \item[(ii)] For every $p\in S$ and every $i\in\{ 1,\ldots,k \}$, one has $a_p\neq b_i$.
\end{enumerate}
Then, we have
\begin{align}
    \nonumber
    \#\{  p\leq X\ | \ p\in S \}=O(X^\theta\log X).
\end{align}
    In particular, $\mathrm{den}(S)=0$ holds. 
\end{lem}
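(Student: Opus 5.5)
The plan is to convert each congruence $a_p\equiv b_i\pmod{\fp}$ with $a_p\neq b_i$ into a divisibility of a non-zero rational integer by $p$, and then bound how many such $p\le X$ there can be.

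\textbf{Step 1: clearing denominators.} Fix a positive integer $D$ such that $\beta_i:=Db_i\in\OO_L$ for all $i$. For each $i$ put
\begin{align}
\nonumber
P_i(x)=N_{L/\QQ}(Dx-\beta_i)=\prod_{\sigma}(Dx-\sigma(\beta_i))\in\ZZ[x],
\end{align}
where $\sigma$ runs over the $d$ embeddings of $L$ into $\CC$.

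\textbf{Step 2: non-vanishing and divisibility.} Let $p\in S$ with $p\nmid D$. By (i) there are $i$ and $\fp\mid p$ with $Da_p-\beta_i\in\fp$. The prime $\fp$ divides the ideal generated by $Da_p-\beta_i$, so $p$ divides $N_{L/\QQ}(Da_p-\beta_i)=P_i(a_p)$. By (ii), $Da_p-\beta_i\neq0$, hence $P_i(a_p)\neq0$. Consequently, for every $p\in S$ with $p\nmid D$, the prime $p$ divides the non-zero integer
\begin{align}
\nonumber
M(a_p)=\prod_{i=1}^{k}P_i(a_p).
\end{align}

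\textbf{Step 3: grouping by the value of $a_p$.} For $p\le X$ we have $|a_p|\le CX^\theta$, so $a_p$ takes at most $2CX^\theta+1$ possible integer values $a$. Fix such a value $a$. Every prime $p\in S$ with $p\le X$, $p\nmid D$ and $a_p=a$ divides the single non-zero integer $M(a)$. Since $|\sigma(\beta_i)|$ is bounded independently of $X$, we get $|M(a)|\ll X^{\theta dk}$. By \Cref{lem: prime divisors}, $\omega(M(a))=O(\log X)$, with an implied constant uniform in $a$.

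\textbf{Step 4: summing.} Summing over the $O(X^\theta)$ possible values $a$, and adding the finitely many primes dividing $D$, gives
\begin{align}
\nonumber
\#\{p\le X\mid p\in S\}=O(X^\theta\log X).
\end{align}
Since $\theta<1$, the quantity $X^\theta\log X$ is $o(X/\log X)$. The prime number theorem then gives $\mathrm{den}(S)=0$.

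The main point of care is Step 2. We must ensure that the non-vanishing of the norm follows from (ii), which holds because $L$ is a field. We must also ensure that the congruence modulo $\fp$ really gives $p\mid P_i(a_p)$, which holds after clearing denominators and discarding primes dividing $D$. Another point to watch is the uniformity of the bound on $\omega(M(a))$ over all $a$ in Step 3.
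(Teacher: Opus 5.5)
Your proposal is correct and follows essentially the same route as the paper. In both, you clear denominators, use (ii) to get a non-zero norm divisible by $p$, and bound that norm polynomially in $X$. You then apply \Cref{lem: prime divisors} for each of the $O(X^\theta)$ possible values of $a_p$ and finish with the prime number theorem. The only difference is cosmetic: you combine all $i$ into the single integer $M(a)=\prod_i P_i(a)$, while the paper treats each $b_i$ separately and uses that $\{b_1,\ldots,b_k\}$ is finite. Your version is slightly cleaner, since it applies $\omega$ to a genuine rational integer, the norm, rather than to $Na_p-N\sigma(b_i)$ as the paper writes.
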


\begin{proof}
    Since $b_i\in L$ for every $i\in\{ 1,\ldots,k \}$, there exists $N>0$ such that
    \begin{align}
    \nonumber
        Nb_i\in\OO_L,\quad {}^{\forall}i\in\{ 1,\ldots,k \}.
    \end{align}
    We may exclude the finitely many primes dividing $N$. 

    Let $p\in S$. By the first assumption, there exist $i\in\{ 1,\ldots,k \}$ and a prime $\fp\mid p$ of $L$ such that
    \begin{align}
    \nonumber
        a_p\equiv b_i\pmod\fp.
    \end{align}
    The above congruence says that
    \begin{align}
        \nonumber
        Na_p-Nb_i\in\fp.
    \end{align}
    Since $Na_p-Nb_i\in\OO_L$, we obtain
    \begin{align}
    \nonumber
        p\mid N_{L/\QQ}(Na_p-Nb_i).
    \end{align}
    By the second assumption, we see that
    \begin{align}
    \nonumber
        Na_p-Nb_i\neq0,
    \end{align}
    and hence, we have
    \begin{align}
    \nonumber
        N_{L/\QQ}(Na_p-Nb_i)\neq0.
    \end{align}
    Assume that $p\leq X$. Then, 
    \begin{align}
    \nonumber
        |N_{L/\QQ}(Na_p-Nb_i)|&=\prod_{\sigma\colon L\hookrightarrow\CC}|Na_p-N\sigma(b_i)|\\
    \nonumber
        &\leq \left( NCX^\theta + N\max_{\substack{1\leq i\leq k \\ \sigma\colon L\hookrightarrow \CC}}|\sigma(b_i)| \right)^d\\
        \nonumber
        &=O(X^{\theta d}).
    \end{align}
    By \Cref{lem: prime divisors}, we have
    \begin{align}
        \nonumber
        \omega(Na_p-N\sigma(b_i))=O(\log X).
    \end{align}
    Thus, for each fixed value $a_p$ and $i$, there are at most $O(\log X)$ possible rational primes $p\in S$ satisfying
    \begin{align}
        \nonumber
        p\mid Na_p-N\sigma(b_i).
    \end{align}

    On the other hand, the assumption
    \begin{align}
        \nonumber
        |a_p|\leq Cp^\theta
    \end{align}
    shows that there are $O(X^\theta)$ possible integer values $a_p$. Since $\{ b_1,\ldots,b_k \}$ is a finite set, we conclude that
    \begin{align}
        \nonumber
        \#\{  p\leq X\ |\ p\in S \}=O(X^\theta\log X).
    \end{align}
    Finally, since $0\leq\theta<1$, we have
    \begin{align}
    \nonumber
        X^\theta\log X=o\left(\frac{X}{\log X}\right).
    \end{align}
    By the prime number theorem, we obtain
    \begin{align}
        \nonumber
        \frac{\#\{  p\leq X\ |\  p\in S \}}{\#\{ p\leq X \}}\to0,\quad X\to\infty.
    \end{align}
    Thus, $\text{den}(S)=0$ holds.
\end{proof}

\begin{proof}[Proof of \Cref{Main 2}]
    Suppose that $\bm{a}(A)=(a_{p}(A)\bmod p)_{p}\in\PP_{\A}^{0}$. Let $L/\QQ$ and $\{ b_{1},\ldots,b_{k} \}\subset L$ be as in the proof of \Cref{Main 1}. 
    
    Let us choose a positive integer $N$ such that $Nb_i\in\OO_L$ for every $i\in\{ 1,\ldots,k \}$. We take a sufficiently large prime $\ell$ such that
    \begin{enumerate}
        \item The Galois representation $\rho_{A,\ell}\colon G_\QQ\to\GSp_{2g}(\ZZ_\ell)$ is surjective,
        \item $\ell\nmid N$,
        \item  $\ell$ and $b_i$ are coprime; that is, $\ell\nmid N_{L/\QQ}(Nb_i)$ for every $i\in\{ 1,\ldots,k \}$ such that $b_i\neq0$.
    \end{enumerate}

    For such $\ell$, define
    \begin{align}
    \nonumber
        S_\ell=\{ p\ \colon \text{good prime for}\ A\ |\ a_p(A)\neq0,\ \ell\mid a_p(A)  \}.
    \end{align}
    By \Cref{lem: density of S}, the set of rational primes that $A$ has good reduction such that $a_p(A)=0$ has natural density $0$. Thus, the natural density of $S_\ell$ is the same as one of
    \begin{align}
        \nonumber
        S^\prime_\ell=\{ p\ \colon \text{good prime for}\ A\ |\ \tr(\overline{\rho}_{A,\ell}(\Frob_p))=0 \}.
    \end{align}
    By Chebotarev density theorem (cf. {\cite[Equation 9, Section 2.1]{Serre Chebotarev}}), we have
    \begin{align}
        \nonumber
        \#\{  p\leq X\ |\ p\in S_\ell \}=C\cdot\frac{X}{\log X}+o\left( \frac{X}{\log X} \right)
    \end{align}
    for some positive constant $C$. In particular, 
    \begin{align}
        \label{estimation}
        \#\{ p\leq X\ |\  p\in S_\ell \}\gg_\ell\frac{X}{\log X}.
    \end{align}

    Now we claim that $a_p(A)\neq b_i$ for every $p\in S_\ell$ and every $i\in\{ 1,\ldots,k \}$. If $b_i=0$, this follows from the definition of $S_\ell$. Thus, we may assume that $b_i\neq0$. If $a_p(A)=b_i$, then $b_i\in\ZZ$ holds. In addition, the condition $\ell\mid a_p(A)$ implies that
    \begin{align}
        \nonumber
        \ell\mid N_{L/\QQ}(Nb_i),
    \end{align}
    contrary to the choice of $\ell$.

    Moreover, the Weil bound gives
    \begin{align}
        \nonumber
        |a_p(A)|\leq2g\sqrt{p}.
    \end{align}
    Thus, we have
    \begin{align}
        \nonumber
        |N_{L/\QQ}(Na_{p}(A)-Nb_{i})|&\leq \left(2gN\sqrt{X}+N\max_{\substack{1\leq i\leq k \\ \sigma\colon L\hookrightarrow\CC}}|\sigma(b_i)|\right)^{[L:\QQ]}\\
        \nonumber
        &=O(X^{[L:\QQ]/2}).
    \end{align}
    Applying \Cref{lem: counting lemma} with $\theta=1/2$, we obtain
    \begin{align}
        \label{contradiction estimate}
        \#\{  p\leq X\ |\ p\in S_\ell \}=O(\sqrt{X}\log X).
    \end{align}
    However, \eqref{contradiction estimate} contradicts \eqref{estimation}. Therefore, $\bm{a}(A)\notin\PP_\A^{0}$.
\end{proof}



\begin{thebibliography}{99}

\bibitem{AA}
A. Aizenbud, and N. Avni,
\textit{Counting points of schemes over finite rings and counting representations of arithmetic lattices},
Duke Math. J., \textbf{167} (2018), no. 14, 2721--2743.

\bibitem{AF}
T. Anzawa, and H. Funakura,
\textit{Congruences of the $q$-Fibonacci sequence related with its transcendence},
Ramanujan J., \textbf{63}, No.4 (2024) 1057--1072.

\bibitem{BGHT}
T.~Barnet-Lamb, D.~Geraghty, M.~Harris, and R.~Taylor, 
\textit{A family of Calabi-Yau varieties and potential automorphy II.},
Publ. Res. Inst. Math. Sci., \textbf{47} (2011) no. 1, 29–98.

\bibitem{Del}
P. Deligne,
\textit{La conjecture de Weil. $\mathrm{I}$.},
Inst. Hautes Etudes Sci. Publ. Math., \textbf{43} (1974) 273--307.

\bibitem{Elkies}
N. D. Elkies,
\textit{Distribution of supersingular primes},
Journ\'ees arithm\'etiques (Luminy, 1989),
Ast\'erisque \textbf{198--199--200} (1991), 127--132.


\bibitem{LZ2}
F. Luca, and W. Zudilin,
\textit{Irrationality and transcendence questions
in the ‘poor man’s adèle ring’},
Ramanujan J., \textbf{67}, No.88 (2025). 

\bibitem{LZ1}
F. Luca, and W. Zudilin,
\textit{Poor man's transcendence for Frobenius traces of elliptic curves},
to appear in Advanced Studies in Pure Math., arXiv: 2507.14773.

\bibitem{MS}
T. Matsusaka, and S. Seki,
\textit{Some results on naive transcendence in the ring of integers modulo infinitely large primes},
 to appear in Journal of the Australian Mathematical Society, arXiv:2604.25566.

\bibitem{Murty}
V-K.~Murty,
\textit{On the Sato-Tate conjecture.},
Number theory related to Fermat's last theorem (Cambridge, Mass., 1981), pp. 195–205, Progr. Math., 26, Birkhäuser, Boston, MA, 1982.

\bibitem{Rosen2}
J. Rosen, 
\textit{A choice-free absolute Galois group and Artin motives},
preprint, arXiv: 1706.06573.

\bibitem{Rosen}
J. Rosen,
\textit{A finite analogue of the ring of algebraic numbers},
J. Number Theory, \textbf{208} (2020) 59--71.

\bibitem{Serre}
J.-P. Serre,
\textit{Propriétés galoisiennes des points d'ordre fini des courbes elliptiques},
Invent. Math., \textbf{15} (1971) 259--331.

\bibitem{Serre Chebotarev}
J.-P. Serre,
\textit{Quelques applications du th\'eor\`eme de densit\'e de Chebotarev},
Publications Mathématiques de l'IHES, \textbf{54} (1981), 123-201.

\bibitem{Serre2}
J.-P. Serre,
\textit{R\'esum\'e des cours de 1985--1986},
Annuaire du Coll\'ege de France (1986), 95--99.

\bibitem{ST}
J.-P. Serre, and J. Tate,
\textit{Good reduction of abelian varieties},
Ann.Math, \textbf{88} (1968) 492--517.

\bibitem{Sil}
J-H.~Silverman, 
\textit{The arithmetic of elliptic curves},
Second edition. Graduate Texts in Mathematics, 106. Springer, Dordrecht, 2009. xx+513 pp.

\bibitem{Zaytsev}
Alexey Zaytsev,
\textit{Generalization of Deuring reduction theorem},
J. Algebra, \textbf{392} (2013) 97--114.

\end{thebibliography}
\end{document}